\numberwithin{equation}{subsection}
\newcolumntype{C}{>{$}c<{$}}
\newcolumntype{R}{>{$}r<{$}}
\newcolumntype{L}{>{$}l<{$}}
\newtheorem{theorem}{Theorem}[section]
\newtheorem{proposition}[theorem]{Proposition}
\newtheorem{corollary}[theorem]{Corollary}
\newtheorem{lemma}[theorem]{Lemma}
\theoremstyle{remark}
\newtheorem{remark}[theorem]{Remark}
\newtheorem{example}[theorem]{Example}
\theoremstyle{definition}
\newtheorem{definition}[theorem]{Definition}
\def\beq{\begin{eqnarray}}
\def\eeq{\end{eqnarray}}
\def\bes{\begin{eqnarray*}}
\def\ees{\end{eqnarray*}}
\def\varphihat{\bm\varphi}
\def\muhat{{\bm \mu}}
\def\s{{\mathbf{s}}}
\def\calF{\mathcal{F}}
\def\bH{\mathbb{H}}
\def\bfX{\mathbf{X}}
\def\bfY{\mathbf{Y}}
\def\C{\mathbb{C}}
\def\calA{{\mathcal{A}}}
\def\X{{\mathbb{X}}}
\def\calE{{\mathcal{E}}}
\def\calF{{\mathcal{F}}}
\def\x{\mathbf{x}}
\def\y{\mathbf{y}}
\def\v{\mathbf{v}}
\def\u{\mathbf{u}}
\def\e{\mathbf{e}}
\def\w{\mathbf{w}}
\def\N{\mathbb{Z}_{\geq 0}}
\def\F{\mathbb{F}}
\def\Q{\mathbb{Q}}
\def\calC{{\mathcal C}}
\def\Z{\mathbb{Z}}
\def\gl{{\mathfrak g\mathfrak l}}
\newcommand{\nc}{\newcommand}
\def\bP{{\bf P}}
\nc{\op}[1]{\mathop{\mathchoice{\mbox{\rm #1}}{\mbox{\rm #1}}
{\mbox{\rm \scriptsize #1}}{\mbox{\rm \tiny #1}}}\nolimits}
\nc{\al}{s}
\nc{\ep}{\varepsilon} \nc{\ga}{\gamma} \nc{\Ga}{\Gamma}
\nc{\la}{\lambda} \nc{\La}{\Lambda} \nc{\si}{\sigma}
\nc{\Sig}{{\Gamma}} \nc{\Om}{\Omega} \nc{\om}{\omega}
\nc{\SL}{{\rm SL}} \nc{\GL}{{\rm GL}} \nc{\PGL}{{\rm PGL}}
\nc{\G}{{\rm G}}
\nc{\cpt}{{\op{cpt}}} \nc{\Dol}{{\op{Dol}}} \nc{\DR}{{\op{DR}}}
\nc{\B}{{\op{B}}} \nc{\Triv}{\op{Triv}} \nc{\Hod}{{\op{Hod}}}
\nc{\Log}{{\op{Log}}} \nc{\Exp}{{\op{Exp}}} \nc{\Est}{E_{\op{st}}}
\nc{\Hst}{H_{\op{st}}} \nc{\Left}[1]{\hbox{$\left#1\vbox to
  10.5pt{}\right.\nulldelimiterspace=0pt \mathsurround=0pt$}}
\nc{\Right}[1]{\hbox{$\left.\vbox to
  10.5pt{}\right#1\nulldelimiterspace=0pt \mathsurround=0pt$}}
\nc{\LEFT}[1]{\hbox{$\left#1\vbox to
  15.5pt{}\right.\nulldelimiterspace=0pt \mathsurround=0pt$}}
\nc{\RIGHT}[1]{\hbox{$\left.\vbox to
 15.5pt{}\right#1\nulldelimiterspace=0pt \mathsurround=0pt$}}
\nc{\bee}{{\bf E}} \nc{\bphi}{{\bf \Phi}}
\begin{document}

\title[Torus orbits and Kac polynomials]{Torus orbits on homogeneous varieties and Kac polynomials of quivers}
\author[P. E. Gunnells]{Paul E. Gunnells}
\address{Department of Mathematics and Statistics\\
University of Massachusetts\\
Amherst, MA  01003}
\email{gunnells@math.umass.edu}

\author[E. Letellier]{Emmanuel Letellier}
\address{Laboratoire de Math\'ematiques Nicolas Oresme/CNRS UMR 61 39\\
Universit\'e de Caen Basse-Normandie\\
Esplanade de la Paix\\
BP 5186\\
14032 Caen\\
France}
\email{emmanuel.letellier@unicaen.fr}
\author[F. Rodriguez Villegas]{Fernando Rodriguez Villegas}
\address{
Abdus Salam International Centre for Theoretical Physics\\
Strada Costiera 11\\
34151 Trieste, Italy
\&
Department of Mathematics\\
University of Texas at Austin\\
Austin, TX 78712
 }
\email{villegas@ictp.it}

\date{2 September, 2013}

\subjclass[2010]{Primary 16G20; Secondary 14M15, 05E05, 05C31}

\thanks{PG is supported by the NSF grant DMS-1101640. EL is supported
by the grant ANR-09-JCJC-0102-01.  FRV is supported by the NSF grant
DMS-1101484 and a Research Scholarship from the Clay Mathematical
Institute.}

\maketitle
\pagestyle{myheadings}

\begin{abstract}In this paper we prove that the counting polynomials
of certain torus orbits in products of partial flag varieties
coincides with the Kac polynomials of supernova quivers, which
arise in the study of the moduli spaces of certain
irregular meromorphic connections on trivial bundles over the
projective line. We also prove that these polynomials can be expressed
as a specialization of Tutte polynomials of certain graphs providing a
combinatorial proof of the non-negativity of their coefficients.

\end{abstract}
\tableofcontents

\section{Introduction}

\subsection{Quivers and Kac polynomials} Given a field $\kappa$ and
$k$ parabolic subgroups $P_1,\dots,P_k$ of $\GL_r(\kappa)$, we form
the cartesian product of partial flag varieties
$\calF:=(\GL_r/P_1)\times\cdots\times\,(\GL_r/P_k)$ on which $\GL_r$ acts
diagonally by left multiplication. To each parabolic $P_i$ corresponds
a unique partition $\mu^i$ of $r$ (given by the size of the blocks).
From the $k$-tuple $\muhat=(\mu^1,\dots,\mu^k)$ we define in a natural
way (see for instance \cite{hausel-letellier-villegas}) a star-shaped
quiver $\Gamma$ with $k$ legs whose lengths are the lengths of the
partitions $\mu^1,\dots,\mu^k$ minus $1$. We also define from $\muhat$
a dimension vector $\v$ of $\Gamma$ with coordinate $r$ on the central
vertex and coordinates $n-\mu^i_1,n-\mu^i_1-\mu^i_2,\dots$ on the
nodes of the $i$-th leg.  Denote by $Z_r\subset\GL_r$ the one
dimensional subgroup of central matrices. The set $\calE(\kappa)$ of
$\GL_r$-orbits of $\calF$ whose
stabilizer is, modulo $Z_r$, a unipotent group is in bijection with
the isomorphism classes of absolutely indecomposable representations
of $(\Gamma,\v)$ over the field $\kappa$. Hence the size of
$\calE(\F_q)$ coincides with the evaluation at $q$ of the Kac
polynomial $A_{\Gamma,\v}(t)$ of $(\Gamma,\v)$, see \S
\ref{representations}. Now it is known from Crawley-Boevey and van den
Bergh \cite{crawley-etal} that when the dimension vector $\v$ is
\emph{indivisible} (i.e.~the gcd of the parts of the partitions
$\mu^i$, $i=1,\dots,k,$ is one), the polynomial $A_{\Gamma,\v}(t)$
coincides (up to a known power of $q$) with the Poincar\'e polynomial
of some quiver varieties $\mathfrak{M}_{\xi}(\v)$ attached to
$(\Gamma,\v)$. Let us give a concrete description of this quiver variety. Assume given $k$ distinct points $a_1,\dots,a_k\in\C$
and a \emph{generic} tuple $(\calC_1,\dots,\calC_k)$ of semisimple
adjoint orbits of $\gl_r(\C)$ such that the multiplicities of the
eigenvalues of $\calC_i$ is given by the partition $\mu^i$. By
Crawley-Boevey \cite{crawley-mat} we can identify this quiver variety
$\mathfrak{M}_\xi(\v)$ with the moduli space of meromorphic
connections
$$
\nabla=d-\sum_{i=1}^kA_i\frac{dz}{z-a_i}
$$
on the trivial rank $r$ vector bundle over the Riemann sphere
$\mathbb{P}^1$ with residues $A_i\in\calC_i$ for $i=1,\dots,k$ and
with no further pole at $\infty$, i.e. $A_1+\cdots+A_k=0$.

In conclusion, when ${\rm gcd}\, (\mu^i_j)_{i,j}=1$, the counting over
$\F_q$ of the $\GL_r$-orbits of $\calF$ with unipotent stabilizer
(modulo $Z_r$) gives the Poincar\'e polynomial of the moduli space of
some regular connections (i.e connections with simple poles at
punctures $a_1,\dots,a_k$) on the trivial rank $r$ vector bundle over
$\mathbb{P}^1$. In general (i.e. without assuming ${\rm gcd}\,
(\mu^i_j)_{i,j}=1$), it is conjectured (see \cite[Conjecture
1.3.2]{hausel-letellier-villegas2}) that this counting coincides with
the pure part of the mixed Hodge polynomial of the moduli space of
$\C^r$-local systems on $\mathbb{P}^1\smallsetminus\{a_1,\dots,a_k\}$
with local monodromy in semisimple conjugacy classes $C_1,\dots,C_k$
of $\GL_r(\C)$ with $(C_1,\dots,C_k)$ \emph{generic} semisimple of
type $\muhat$.

\subsection{Torus orbits on homogeneous varieties} There is another
geometric counting problem that also arises in this setup.  Let
$T\subset \GL_{r}$ be the maximal torus of diagonal matrices.  We can
consider the enumeration over $\F_q$ of the $T$-orbits in
$\calF$.  In general this is a
very subtle problem, even for the simplest case of a single maximal
parabolic subgroup of $\GL_{r}$ where we would be counting
torus orbits on Grassmannians. This problem is connected to matroids,
configuration spaces of points in projective spaces, generalizations
of the dilogarithm, hypergeometric functions, and moduli spaces of
genus $0$ pointed curves \cite{kapranov, gm, ggms, gs}.

In this paper we show (Theorem \ref{maintheo1}) that the counting
function $E^T(q)$ of the $T$-orbits of $\calF$ whose stabilizer is
equal to $Z_r$ coincides with $A_{\Gamma,\v}$, the Kac polynomial of a
certain quiver $\Gamma$ for a certain dimension vector $\v$ (see
\S\ref{dandelion} for the definitions and a picture of $\Gamma$). As a
consequence, $E^T(q)$ is a monic polynomial in $q$ with non-negative
integer coefficients whose degree is given by an explicit
formula. Moreover, we obtain necessary and sufficient condition for
$E^T(q)$ to be non-zero (Theorem \ref{maintheo1}).

The quiver $\Gamma$ belongs to a class of quivers known as
\emph{supernova quivers} (the name is due to Boalch).  The
corresponding generic quiver varieties $\mathfrak{M}_\xi(\v)$ have the
following explicit interpretation. Given a tuple
$(\calC_1,\dots,\calC_k)$ of semisimple adjoint orbits of $\gl_r(\C)$
of type $\muhat$ as above, it follows from Boalch \cite[Theorem 9.11
\& Theorem 9.16]{Boalch} that $\mathfrak{M}_\xi(\v)$ is isomorphic to
the moduli space of meromorphic connections on the trivial rank $r$
vector bundle over $\mathbb{P}^1$ with $k$ simple poles at
$a_1,\dots,a_k$ with residues in $\calC_1,\dots,\calC_k$, and with an
extra pole of order $2$ whose coefficient in $dz/z^2$ (in a local
trivialization) is a semisimple regular matrix.  Hence using the main
result of \cite{crawley-etal} (on the connection between Kac
polynomial $A_{\Gamma,\v}(t)$ and Poincar\'e polynomials of
$\mathfrak{M}_\xi(\v)$), Boalch's result and the results of this
paper, we end up with an interpretation of $E^T(q)$ as the Poincar\'e
polynomial of the moduli space of certain irregular meromorphic
connections as above on the trivial rank $r$ bundle over
$\mathbb{P}^1$.

\subsection{Graph polynomials}
The second main result of this paper is a refined analysis of the
coefficients of the polynomials $A_{\Gamma,\v}(q)=E^T(q)$. More
precisely, we express $E^T(q)$ as a sum of the specialization
$x=1,y=q$ of the \emph{Tutte polynomial} of certain associated graphs
(see Theorem \ref{maintheo2} and~\S\ref{rem:tutte}). We deduce that
the coefficients of $A_{\Gamma,\v}(q)$ count spanning trees in these
graphs of a given weight, which accounts for their
nonnegativity. 

Recall that Kac conjectured that the coefficients of Kac polynomials
(for any finite quiver) are non-negative \cite{kac}. This conjecture
was proved in in the case of an indivisible dimension vector by
Crawley-Boevey and van den Bergh \cite{crawley-etal} with further case
proved by Mozgovoy \cite{mozgovoy}; it was proved in full generality
by Hausel-Letellier-Villegas \cite{hausel-letellier-villegas3}. The
proofs all give a cohomological interpretation of the coefficients of
the Kac polynomial. Our proof of the non-negativity for Kac
polynomials of the supernova quivers is completely different relying,
as mentioned, on Tutte's interpretation of the coefficients of his
polynomial in terms of spanning trees.  This proof is purely
combinatorial and opens a new approach in understanding the Kac
polynomials.

In a continuation to this paper we will discuss how, in fact, the
whole Tutte polynomial of the associated graphs is related to counting
$T$-orbits of~$\calF$.

\section{Supernova complete bi-partite quivers}

\subsection{Generalities on quivers}

Let $\Gamma$ be a finite quiver, $I$ its set of vertices and $\Omega$
its set of arrows. We assume that $\Gamma$ has no loops. For
$\gamma\in \Omega$ we denote by $h(\gamma)$ (respectively, $t(\gamma)$) the head
(resp., tail) of $\gamma$. A \emph{dimension vector} $\v$ of $\Gamma$
is a tuple $(v_i)_{i\in I}$ of non-negative integers
indexed by $I$.

\subsubsection{Roots}

We now recall some well known properties of roots in quivers.  For
more information, we refer the reader to \cite{kac}.

For $i\in I$, let $\e_i \in \Z^{I}$ be the tuple with coordinate $i$
equal 1 and all other coordinates $0$.  Let ${\bf C}=(c_{ij})_{i,j}$
be the Cartan matrix of $\Gamma$, namely
$$
c_{ij}=\begin{cases} 2&\text{if }i=j\\ 
  -n_{ij}&\text{ otherwise},
         \end{cases}
$$
where $n_{ij}$ is the number of edges joining vertex $i$ to vertex
$j$.  The Cartan matrix determines a symmetric bilinear form
$(\phantom{a},\phantom{a})$ on $\Z^I$ by
$$
(\e_i,\e_j)=c_{ij}.
$$
For $i\in I$, define the fundamental reflection
$s_i\colon \Z^I\rightarrow\Z^I$ by
$$
s_i(\lambda)=\lambda-(\lambda,\e_i)\,\e_i, \quad \lambda \in \Z^{I}.
$$
The Weyl group $W=W_\Gamma$ of $\Gamma$ is defined as the subgroup of
automorphisms $\Z^I\rightarrow\Z^I$ generated by the fundamental
reflections $\{s_{i}\mid i\in I\}$. A vector $\v\in\Z^I$ is
called a \emph{real root} if $\v=w(\e_i)$ for some $w\in W$ and $i\in
I$. Let $M=M_\Gamma$ be the set of vectors $\u\in\N^I-\{0\}$ with connected
support such that for all $i\in I$, we have
$$
(\e_i,\u)\leq 0.
$$
Then a vector $\v\in\Z^I$ is said to be an \emph{imaginary root} if
$\v=w(\delta)$ or $\v=w(-\delta)$ for some $\delta\in M$ and $w\in W$.
Elements of $M$ are called \emph{fundamental imaginary roots}.
We denote by $\Phi=\Phi(\Gamma)\subset\Z^I$ the set of all roots of
$\Gamma$ (real and imaginary).

A root is said to be \emph{positive} if its coordinates are all
non-negative. One can show that an imaginary root is positive if and
only if it is of the form $w(\delta)$ with $\delta\in M$. In
particular the Weyl group $W$ preserves the set of positive imaginary
roots.

For any vector $\u\in\Z^I$ put
$$
\Delta(\u):=-\frac{1}{2}(\u,\u).
$$
We have the following characterization of the imaginary roots \cite[Proposition 5.2]{kacbook}:
\begin{lemma}\label{lem:kaclemma}
 Assume that $\v\in\Phi$. Then $\v$ is imaginary if and only
if $\Delta(\v)\geq 0$.  \label{Dpos} \hfill \qed
\end{lemma}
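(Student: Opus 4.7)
The plan is to exploit the Weyl-invariance of the bilinear form $(\,,\,)$, and hence of $\Delta$, to reduce both directions to computations at canonical representatives: $\e_i$ for real roots, and $\delta \in M$ for positive imaginary roots.

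First I would verify that each fundamental reflection $s_i$ is an isometry of $(\,,\,)$. This is a direct computation: for any $\lambda,\mu\in\Z^I$, one checks that $(s_i(\lambda),s_i(\mu))=(\lambda,\mu)$ using the definition $s_i(\lambda)=\lambda-(\lambda,\e_i)\e_i$ and the fact that $(\e_i,\e_i)=2$. Consequently the entire Weyl group $W$ preserves $(\,,\,)$, and therefore preserves $\Delta$.

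For the forward implication, suppose $\v$ is imaginary, so $\v=\pm w(\delta)$ for some $\delta\in M$ and $w\in W$. By $W$-invariance, $\Delta(\v)=\Delta(\delta)$. Now expand
\[
(\delta,\delta)=\sum_{i\in I}\delta_i\,(\e_i,\delta).
\]
Since $\delta\in M$ we have $\delta_i\ge 0$ and $(\e_i,\delta)\le 0$ for every $i\in I$, so each summand is nonpositive. Hence $(\delta,\delta)\le 0$ and $\Delta(\delta)\ge 0$, as desired.

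For the converse, argue by contrapositive: if $\v\in\Phi$ is not imaginary, then $\v$ is real, so $\v=w(\e_i)$ for some $w\in W$ and $i\in I$. Applying $W$-invariance again,
\[
\Delta(\v)=\Delta(\e_i)=-\tfrac{1}{2}(\e_i,\e_i)=-1<0.
\]
Thus $\Delta(\v)\ge 0$ forces $\v$ to be imaginary, completing the proof. There is no serious obstacle here: the entire content is the $W$-invariance of the form together with the sign inequality built into the definition of $M$; the only mild check is the isometry property of the $s_i$, which is a one-line calculation.
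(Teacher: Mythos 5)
Your proof is correct, and it is in fact the standard argument: the paper does not spell out a proof (it cites Proposition~5.2 of Kac's book and closes with $\qed$), and the argument you give is precisely the one found there. The two key facts you use — that each $s_i$ preserves the Tits form (hence so does all of $W$ and hence $\Delta$), and that the sign inequalities defining $M$ force $(\delta,\delta)=\sum_i \delta_i(\e_i,\delta)\le 0$ while $\Delta(\e_i)=-1$ for a loopless quiver — are exactly the right ingredients, and the reduction to representatives $\delta\in M$ (or $\e_i$) via $W$-invariance is the whole content of the statement. One small remark that you tacitly use and could make explicit: $\Delta(-\delta)=\Delta(\delta)$ since $\Delta$ is a quadratic form, which is why the $\pm w(\delta)$ case poses no issue. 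Otherwise there is nothing to add.
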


\subsubsection{Representations}\label{representations}

Let $\kappa$ be a field. A \emph{representation} $\varphihat$ of
$\Gamma$ over $\kappa$ is a finite-dimensional graded $\kappa$-vector
space $V^{\varphihat} := \bigoplus_{i\in I}V_i^{\varphihat}$ and a
collection $(\varphi_\gamma)_{\gamma\in\Omega}$ of linear maps
$\varphi_\gamma\colon V_{t(\gamma)}^{\varphihat}\rightarrow
V_{h(\gamma)}^{\varphihat}$. The vector $\v = ({\rm dim}\, V_i)_{i\in
I}$ is called the \emph{dimension vector} of $\varphihat$. We denote
by ${\rm Rep}_{\Gamma,\v}(\kappa)$ the $\kappa$-vector space of
representations of $\Gamma$ of dimension vector $\v$ over $\kappa$.

For $\varphihat\in {\rm Rep}_{\Gamma,\v}(\kappa)$ and $\varphihat'\in
{\rm Rep}_{\Gamma,\v'}(\kappa)$, we have the obvious notions of
morphism $\varphihat\rightarrow\varphihat'$ and direct sum
$\varphihat\oplus\varphihat'\in {\rm Rep}_{\Gamma,\v+\v'}(\kappa)$. We
say that a representation of $\Gamma$ over $\kappa$ is
\emph{indecomposable} if it is not isomorphic to a direct sum of two
non-zero representations of $\Gamma$ over $\kappa$. An indecomposable
representation of $\Gamma$ over $\kappa$ that remains indecomposable
over any finite field extension of $\kappa$ is called an
\emph{absolutely indecomposable} representation of $\Gamma$ over
$\kappa$.

Recall \cite{kac} that there exists a polynomial
$A_{\Gamma,\v}(t)\in\Z[t]$ such that for any finite field $\F_q$, the
evaluation $A_{\Gamma,\v}(q)$ counts the number of isomorphism classes
of absolutely indecomposable representations of $\Gamma$ of dimension
$\v$ over $\F_q$.  We call $A_{\Gamma,\v}(t)$ the \emph{Kac
polynomial} of $\Gamma$ with dimension vector $\v$.\footnote{In the
literature this polynomial is sometimes called the \emph{$A$-polynomial}.}

\begin{theorem}\label{kac2}
The polynomial $A_{\Gamma,\v}(t)$ satisfies the following properties \cite{kac}:
\begin{enumerate}
\item The polynomial $A_{\Gamma,\v}(t)$ does not depend
on the orientation of the underlying graph of $\Gamma$.
\item The polynomial $A_{\Gamma,\v}(t)$ is non zero if and
only if $\v\in\Phi(\Gamma)$. Moreover $A_{\Gamma,\v}(t)=1$ if and only
if $\v$ is a real root. 
\item If non-zero, the polynomial $A_{\Gamma,\v}(t)$ is monic of degree $\Delta(\v)+1$. 
\item For all $w\in W$, we have $A_{\Gamma,w(\v)}(t)=A_{\Gamma,\v}(t)$.
\end{enumerate}
\end{theorem}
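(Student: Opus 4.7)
The four statements are all due to Kac, and the plan is to follow his original approach, which handles each property by a distinct ingredient.

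For (i), orientation independence: fix the underlying graph and reverse a single arrow $\gamma \colon i \to j$. This replaces a factor $\Hom(V_i, V_j)$ inside ${\rm Rep}_{\Gamma,\v}(\F_q)$ by its dual $\Hom(V_j, V_i)$, and a Fourier transform on the additive group combined with Burnside's lemma shows that the number of $\GL_\v$-orbits of each stabilizer type is preserved. Iterating over arrows establishes (i).

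For (iv), Weyl group invariance: using (i) one may orient all arrows incident to $i$ so that $i$ is a sink. The Bernstein--Gelfand--Ponomarev reflection functor $S_i^+$ then gives an $\F_q$-defined bijection between isomorphism classes of indecomposable representations of dimension $\v$ (with the simple at $i$ excluded) and those of dimension $s_i(\v)$. Because $S_i^+$ commutes with base change, it preserves absolute indecomposability, so the counts agree for every prime power $q$ and hence $A_{\Gamma,s_i(\v)}(t) = A_{\Gamma,\v}(t)$ as polynomials in $t$.

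For (ii) and (iii), the combinatorial heart is a counting argument. Burnside applied to the $\GL_\v$-action on ${\rm Rep}_{\Gamma,\v}(\F_q)$, together with a stratification of representations by isomorphism type, shows that the number $m_{\Gamma,\v}(q)$ of isomorphism classes of all representations of dimension $\v$ is a polynomial in $q$. The exponential/product formula expressing $m$ in terms of the counts of indecomposables, followed by M\"obius inversion, produces a polynomial counting indecomposables; a further Galois descent over $\F_{q^n}/\F_q$ extracts the polynomial $A_{\Gamma,\v}(q)$. For the degree in (iii), a direct dimension count gives $\dim {\rm Rep}_{\Gamma,\v} - \dim \GL_\v + 1 = \Delta(\v) + 1$ (the $+1$ accounting for the centrally acting scalars), and a generic $\GL_\v$-orbit on the absolutely indecomposable locus realizes this dimension, yielding monicity.

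The main obstacle is the support statement in (ii). The easy direction exhibits an absolutely indecomposable representation for each $\v \in \Phi$: real roots reduce via (i) and (iv) to $\v = \e_i$ where the simple representation works, and for $\v$ in the fundamental set $M$ one constructs an open subset of ${\rm Rep}_{\Gamma,\v}$ consisting of absolutely indecomposables. The hard direction, $A_{\Gamma,\v} = 0$ for $\v \notin \Phi$, requires Kac's theorem on the canonical decomposition: for such $\v$, any representation of dimension $\v$ over an algebraic closure decomposes non-trivially. This canonical-decomposition analysis and the inductive use of reflection functors needed to reduce arbitrary $\v$ to the fundamental domain together form the technical heart of the proof and would be the chief obstacle to work out in detail.
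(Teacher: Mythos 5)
The paper states this theorem with a citation to Kac and gives no proof of its own, so there is nothing internal to compare against; your sketch faithfully reconstructs the main lines of Kac's argument (Fourier/Burnside counting for orientation independence, BGP reflection functors for Weyl invariance, the Krull--Schmidt/M\"obius/Galois-descent chain for polynomiality, the generic-orbit dimension count $\dim {\rm Rep}_{\Gamma,\v} - \dim G_\v + 1 = \Delta(\v)+1$ for the leading term, and realization of roots together with the canonical decomposition for the support claim). The one compressed step is in (i): Burnside's lemma does not directly give invariance of the orbit count \emph{by stabilizer type}; the correct route is that duality plus Burnside shows the \emph{total} number $m_{\Gamma,\v}(q^n)$ of isomorphism classes over $\F_{q^n}$ is orientation-independent for every $n$, and one then transports this through the Krull--Schmidt, M\"obius-inversion, and Galois-descent steps you already describe under (ii)--(iii) to conclude that $A_{\Gamma,\v}$ itself is orientation-independent.
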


We have also the following theorem (see Hausel-Letellier-Villegas
\cite{hausel-letellier-villegas3}), which was conjectured by Kac
\cite{kac}:

\begin{theorem} The polynomial $A_{\Gamma,\v}(t)$ has non-negative integer coefficients.
\label{HLV}\end{theorem}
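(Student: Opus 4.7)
The plan is to establish non-negativity by providing a cohomological interpretation for the coefficients of $A_{\Gamma,\v}(t)$, following the strategy of \cite{hausel-letellier-villegas3}. The starting point is Kac's theorem, which identifies $A_{\Gamma,\v}(q)$ with the number of isomorphism classes of absolutely indecomposable representations of $(\Gamma,\v)$ over $\F_q$. Combining this with the Krull--Schmidt theorem, the weighted count of all finite-dimensional representations---equivalently, the groupoid cardinality of ${\rm Rep}_{\Gamma,\v}(\F_q)/\GL_\v(\F_q)$---can be written as a plethystic exponential in the generating series of the $A_{\Gamma,\v'}(q)$ for $\v'\leq\v$. Inverting this identity extracts $A_{\Gamma,\v}(q)$ from the counts of representation varieties, reducing non-negativity to the statement that a certain plethystic logarithm of a geometrically meaningful generating series has non-negative coefficients.

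Second, I would import geometric input through Nakajima's framed quiver varieties $\mathfrak{M}(\v,\w)$. For each auxiliary framing dimension vector $\w$, the variety $\mathfrak{M}(\v,\w)$ is smooth, and the projective morphism $\pi\colon\mathfrak{M}(\v,\w)\rightarrow\mathfrak{M}_0(\v,\w)$ to the affine quiver variety satisfies the hypotheses of the Beilinson--Bernstein--Deligne decomposition theorem. Consequently $R\pi_*\underline{\Q}_\ell$ decomposes as a direct sum of shifted simple perverse sheaves supported on the Lusztig-type strata of $\mathfrak{M}_0(\v,\w)$, and the stalkwise Poincar\'e polynomials of these intersection cohomology complexes have non-negative coefficients by construction. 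For indivisible $\v$ one is already in the Crawley-Boevey--Van den Bergh situation of a smooth generic quiver variety $\mathfrak{M}_\xi(\v)$, where $A_{\Gamma,\v}(t)$ is literally a Poincar\'e polynomial and non-negativity is immediate; the framing device is what allows us to proceed beyond the indivisible case.

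Third, I would assemble the IC Poincar\'e polynomials into a single generating series in both $\v$ and $\w$, and show that after applying the plethystic logarithm and taking an appropriate limit as $\w$ grows to infinity, one recovers precisely the generating series for $A_{\Gamma,\v}(t)$. Non-negativity then propagates through each stage, since the plethystic operations involved preserve non-negativity when applied to series whose coefficients are manifestly non-negative.

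The main obstacle will be this last step: carrying out the plethystic inversion rigorously in the non-indivisible regime and identifying the outcome with $A_{\Gamma,\v}(t)$. The affine variety $\mathfrak{M}_0(\v,\w)$ carries a complicated Lusztig-type stratification whose deeper strata contribute IC summands via intricate combinatorial identities; controlling these contributions and proving that they recombine to give exactly the Kac polynomial, rather than some correction thereof, is the technical heart of the argument. By contrast, one should note that the approach taken in the present paper is entirely different and avoids this cohomological machinery: for the specific class of supernova quivers, positivity is obtained combinatorially by expressing $A_{\Gamma,\v}(q)$ as a sum of Tutte polynomial specializations and invoking the spanning-tree interpretation of Tutte coefficients.
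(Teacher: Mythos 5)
The paper does not prove Theorem~\ref{HLV}: it is quoted as a known result of Hausel--Letellier--Villegas \cite{hausel-letellier-villegas3} (the resolution of Kac's conjecture), and the text immediately before the statement makes this explicit. So there is no internal proof of this theorem in the paper to compare against. What the paper does prove, via Theorems~\ref{thm:main} and~\ref{maintheo2} and the Tutte-polynomial identity $R_\w(q)=T_{K_\w}(1,q)$, is non-negativity \emph{only} for the specific class of Kac polynomials $A_{\Gamma,\v_\muhat}$ attached to complete bipartite supernova quivers with the dimension vectors $\v_\muhat$; this is an alternative, combinatorial argument, not a proof of the general statement, and you correctly flag this distinction in your last paragraph.

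As for your sketch of the general cohomological proof: its broad shape (Hua-type plethystic inversion plus a geometric interpretation of the resulting coefficients) is the right genre, but the specific mechanism you describe is not the one Hausel--Letellier--Villegas use, and it has a real gap. The passage to framed Nakajima quiver varieties $\mathfrak{M}(\v,\w)$, application of the BBD decomposition theorem to $\pi\colon\mathfrak{M}(\v,\w)\to\mathfrak{M}_0(\v,\w)$, and a ``limit over $\w$'' is closer to the Mozgovoy-style or motivic-DT line of attack; by itself, positivity of IC stalk Poincar\'e polynomials does not obviously survive the plethystic \emph{logarithm}, which is an alternating-sign operation, so ``non-negativity propagates through each stage'' is precisely the step that needs an actual identity, not an assertion. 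The HLV argument instead attaches long legs to every vertex of $\Gamma$ to pass to an auxiliary quiver whose Nakajima variety carries a Weyl-group action, and identifies $A_{\Gamma,\v}(q)$ with the Poincar\'e polynomial of the \emph{sign-isotypic} component of its cohomology; non-negativity is then immediate because the coefficients are dimensions of vector spaces, with no plethystic inversion left to control. Your proposal would need to be reworked along those lines (or one would need to fill in the missing cancellation argument) before it could stand as a proof.
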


For $\v=(v_i)_{i\in I}$ a dimension vector, put 
\[
G_\v:=\prod_{i\in I}\GL_{v_i}(\kappa),
\]
and identify ${\rm Rep}_{\Gamma,\v}(\kappa)$ with
$\bigoplus_{\gamma\in\Omega}{\rm
Mat}_{v_{h(\gamma)},v_{t(\gamma)}}(\kappa)$. Under this identification
the group $G_\v$ acts on ${\rm Rep}_{\Gamma,\v}(\kappa)$ by
simultaneous conjugation:
$$
g\cdot\varphihat=(g_{v_{h(\gamma)}}\varphi_\gamma g^{-1}_{v_{t(\gamma)}})_{\gamma\in\Omega}.
$$
Then two representations are isomorphic if and only if they are
$G_\v$-conjugate. Put
\[
Z_\v = \{(\lambda\cdot {\rm Id}_{v_i})_{i\in I}\in G_\v \mid \lambda \in \kappa^{\times}\}.
\]
The group $Z_\v$ acts trivially on ${\rm Rep}_{\Gamma,\v}(\kappa)$. We
have the following characterization of absolute indecomposibility in
terms of $G_{\v}$ and $Z_\v$:
\begin{proposition} \cite[\S
1.8]{kac} A representation in ${\rm
Rep}_{\Gamma,\v}(\kappa)$ is absolutely indecomposable if and only if
the quotient of its stabilizer in $G_\v$ by $Z_\v$ is a unipotent group.
\label{kacu}\end{proposition}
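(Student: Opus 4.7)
The plan is to translate the proposition into a statement about the endomorphism algebra $A := \End_\Gamma(\varphihat)$, which is a finite-dimensional associative $\kappa$-algebra. First, I would identify the stabilizer of $\varphihat$ in $G_\v$ with the unit group $A^\times = \Aut_\Gamma(\varphihat)$; this is essentially the definition of the $G_\v$-action, since the condition $g \cdot \varphihat = \varphihat$ says exactly that $g = (g_i)_{i\in I}$ commutes with every $\varphi_\gamma$. The subgroup $Z_\v$ embeds into $A^\times$ as the scalar endomorphisms, since $\lambda \cdot \mathrm{Id}$ commutes with every $\varphi_\gamma$.

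The key algebraic input is the Fitting/Krull--Schmidt correspondence: a finite-dimensional representation $\varphihat$ is indecomposable if and only if $A$ is a local algebra, i.e.\ $A/\rad(A)$ is a finite-dimensional division $\kappa$-algebra. Since for any extension $L/\kappa$ one has $\End_\Gamma(\varphihat \otimes_\kappa L) = A \otimes_\kappa L$, absolute indecomposability of $\varphihat$ amounts to $A \otimes_\kappa \bar\kappa$ being local; as the only finite-dimensional division algebra over $\bar\kappa$ is $\bar\kappa$ itself, this in turn is equivalent to $A/\rad(A) \cong \kappa$.

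So the proposition reduces to the purely algebraic equivalence:
\emph{$A/\rad(A) \cong \kappa$ if and only if $A^\times/Z_\v$ is a unipotent algebraic group.}
For the forward direction, if $A = \kappa \cdot 1 \oplus \rad(A)$ as $\kappa$-vector spaces, then every unit factors uniquely as $\lambda(1+n)$ with $\lambda \in \kappa^\times$ and $n \in \rad(A)$, so $A^\times \cong Z_\v \times (1+\rad(A))$ and $A^\times/Z_\v \cong 1 + \rad(A)$, which is unipotent since $\rad(A)$ is a nilpotent ideal. For the converse, one base-changes to $\bar\kappa$: by Wedderburn, $(A\otimes\bar\kappa)/\rad \cong \prod_j M_{n_j}(\bar\kappa)$, and the unit group of this semisimple quotient is the reductive group $\prod_j \GL_{n_j}$. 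The image of $Z_\v$ in this quotient is the one-dimensional diagonal torus; if $(A\otimes\bar\kappa)^\times/Z_\v$ is unipotent, then so is the corresponding quotient of the reductive group, which forces there to be a single factor with $n_1=1$. Hence $\dim_\kappa A/\rad(A) = 1$, i.e.\ $A/\rad(A) = \kappa$.

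I expect the main subtlety to be bookkeeping around the algebraic-group interpretation of $A^\times$: one must ensure that the stabilizer in $G_\v$ is identified with $A^\times$ not merely as an abstract group but as an algebraic group over $\kappa$, so that the unipotency hypothesis passes correctly to the base change $A\otimes_\kappa\bar\kappa$ and one may apply Wedderburn theory there. Once that identification is in place, the rest of the argument is the standard structure theory of finite-dimensional algebras.
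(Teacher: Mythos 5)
The paper does not give a proof of this proposition: it cites Kac~\cite[\S 1.8]{kac}. Your proof is a correct and fully standard unpacking of that citation, and it is essentially the argument behind Kac's own observation: identify the stabilizer with $\End_\Gamma(\varphihat)^\times$, use Fitting's lemma to translate (absolute) indecomposability into the statement that $A/\rad(A)\cong\kappa$, and split the unit group as $\kappa^\times\times(1+\rad A)$ for the forward direction while invoking Wedderburn and the reductivity of $\prod\GL_{n_j}$ for the converse. Two small points worth being explicit about, both of which your final paragraph already gestures at: the passage from ``$A\otimes\bar\kappa$ local'' to ``$A/\rad(A)\cong\kappa$'' uses that $\rad(A)\otimes\bar\kappa=\rad(A\otimes\bar\kappa)$, which is automatic here because $\kappa$ is (in the relevant case) a finite, hence perfect, field; and the word ``unipotent'' in the proposition should be read scheme-theoretically, i.e.\ the stabilizer is the group of $\kappa$-points of a closed subgroup scheme of $\prod\GL_{v_i}$ isomorphic to the unit-group scheme of $A$, so that unipotency is stable under base change to $\bar\kappa$ --- this is exactly what the paper needs later in the proof of Proposition~\ref{ab}, where a semisimple element $t$ that is unipotent modulo $Z_{\v}$ is forced to be central.
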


\subsection{Complete bipartite supernova quivers}\label{dandelion} We
now introduce the main objects of this paper.  For fixed non-negative
integers $r,k,s_1,\dots,s_k$ consider the quiver $\Gamma$ with
underlying graph as in Figure \ref{fig:supernova}.  The subgraph with
vertices $(1),\dots,(r), (1;0),\dots,(k;0)$ is the complete bipartite
graph of type $(r,k)$, i.e.~there is an edge between any two vertices
of the form $(i)$ and $(j;0)$.  We orient all edges toward the
vertices $(1;0),\dots, (k;0)$, and denote by $I$ the set of vertices
of $\Gamma$ and by $\Omega$ the set of its arrows.  We call paths of
the form $(j;s_{j}), (j;s_{j}-1),\dotsc ,(j;0)$ the \emph{long legs}
of the graph, and the edges of the complete bipartice subgraph the
\emph{short legs}.

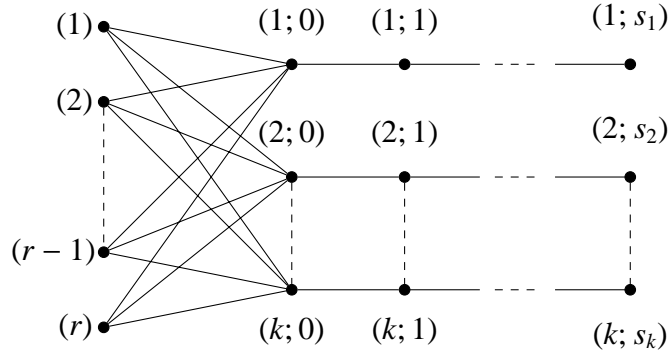
\begin{figure}[htb]
\begin{center}
{
\begin{tikzpicture}
[nodedecorate/.style={shape=circle, fill=black,inner sep=2pt,draw,thick},%
linedecorate/.style={-,thick}]
\tikzstyle{every node}=[draw,circle,fill=black,minimum size=4pt,
                            inner sep=0pt]
 \draw (0,3.5) node (1) [label=left:$(1)$] {}
 (0,2.5) node (2)[label=left:$(2)$]{}
(0, 0.5) node (rm1)[label=left:$(r-1)$]{}
(0,-0.5) node (r)[label=left:$(r)$]{}
(2.5,3) node (1;0)[label=above:$(1;0)$]{}
(2.5,1.5) node(2;0)[label=above:$(2;0)$]{}
(2.5,0) node(k;0)[label=below:$(k;0)$]{}
(4,3) node (1;1)[label=above:$(1;1)$]{}
(4,1.5) node(2;1)[label=above:$(2;1)$]{}
(4,0) node(k;1)[label=below:$(k;1)$]{}
(7,3) node (1;s1)[label=above:$(1;s_1)$]{}
(7,1.5) node(2;s2)[label=above:$(2;s_2)$]{}
(7,0) node(k;sk)[label=below:$(k;s_k)$]{}
;
\draw[dashed] (2)--(rm1) (2;1)--(k;1) (2;s2)--(k;sk) (2;0)--(k;0) (5.2,3)--(5.8,3) (5.2,1.5)--(5.8,1.5) (5.2,0)--(5.8,0);

\draw (1)--(1;0) (1)--(2;0) (1)--(k;0) (2)--(1;0) (2)--(2;0) (2)--(k;0) (rm1)--(1;0) (rm1)--(2;0) (rm1)--(k;0) (r)--(1;0) (r)--(2;0) (r)--(k;0) (1;0)--(1;1) (2;0)--(2;1) (k;0)--(k;1) (1;1)--(5,3) (6,3)--(1;s1) (2;1)--(5,1.5) (6,1.5)--(7,1.5) (4,0)--(5,0) (6,0)--(7,0);
\end{tikzpicture}
}
\end{center}
\caption{The complete bipartite supernova
graph\label{fig:supernova}}
\end{figure}

For $\v\in\N^I$ and $i=1,\dots,k$, define
$$
\delta_i(\v):=-(\e_{(i;0)},\v)=-2v_{(i;0)}+v_{(i;1)}+\sum_{j=1}^rv_{(j)}.
$$

\begin{lemma}\label{lem:twofive}
 Let $\v\in\N^I$. Then $\v$ is in $M_\Gamma$ if and only if the following three conditions are satisfied 

(i) for all $i=1,\dots,k$ we have $\delta_i(\v)\geq 0$,

(ii)  for all $l=1,\dots,r$, 
$$\sum_{j=1}^k v_{(i;0)}\geq 2v_{(l)},
$$

(iii) for all $i=1,\dots,k$ and all $j=0,\dots,s_i-1$, 

\beq
v_{(i;j)}-v_{(i;j+1)}\geq
v_{(i;j+1)}-v_{(i;j+2)}
\label{part}\eeq
with the convention that $v_{(i;s_i+1)}=0$.  \label{fd1}
\end{lemma}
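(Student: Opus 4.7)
The proof is a direct vertex-by-vertex unpacking of the defining inequalities for $M_\Gamma$. Recall that $\v \in M_\Gamma$ means exactly that $\v \in \N^I \setminus \{0\}$ has connected support and $(\e_i, \v) \leq 0$ for every vertex $i \in I$. Since the Cartan form satisfies $(\e_i, \v) = 2v_i - \sum_j n_{ij} v_j$ where $n_{ij}$ is the number of edges joining $i$ and $j$, the condition at vertex $i$ reads
\[
2 v_i \;\leq\; \sum_{j \sim i} v_j .
\]
The plan is simply to read off what this becomes for each of the three classes of vertices of $\Gamma$.

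First, for each "short-leg tip" vertex $(l)$ with $l \in \{1,\dots,r\}$, the neighbors in $\Gamma$ are exactly the hub vertices $(1;0),\dots,(k;0)$ (because the subgraph on $(1),\dots,(r),(1;0),\dots,(k;0)$ is complete bipartite). The inequality at $(l)$ therefore becomes
\[
2 v_{(l)} \;\leq\; \sum_{i=1}^k v_{(i;0)},
\]
which is precisely condition (ii). Next, for each hub vertex $(i;0)$, the neighbors are $(1),\dots,(r)$ together with the first long-leg vertex $(i;1)$. The inequality at $(i;0)$ reads
\[
2 v_{(i;0)} \;\leq\; v_{(i;1)} + \sum_{l=1}^r v_{(l)},
\]
which, after rearranging, is exactly $\delta_i(\v) \geq 0$, i.e.~condition (i).

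Finally, for each long-leg vertex $(i;j)$ with $j \in \{1,\dots,s_i\}$, the neighbors are $(i;j-1)$ and, when $j < s_i$, also $(i;j+1)$. Using the convention $v_{(i;s_i+1)} = 0$ so that the endpoint $(i;s_i)$ fits the same pattern, the inequality becomes $2 v_{(i;j)} \leq v_{(i;j-1)} + v_{(i;j+1)}$, which rearranges to
\[
v_{(i;j-1)} - v_{(i;j)} \;\geq\; v_{(i;j)} - v_{(i;j+1)} .
\]
Shifting $j \mapsto j+1$ to match the indexing of the lemma yields condition (iii), ranging over $j = 0,\dots,s_i - 1$. Since conditions (i)--(iii) jointly account for every vertex of $\Gamma$, together they are equivalent to $(\e_i,\v) \leq 0$ for all $i$, proving the lemma. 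The only subtle point is handling the leaf vertex $(i;s_i)$ correctly, which is taken care of by the convention $v_{(i;s_i+1)} = 0$; there is no real obstacle beyond bookkeeping.
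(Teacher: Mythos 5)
Your vertex-by-vertex unpacking of $(\e_i,\v)\leq 0$ is the intended argument; the paper omits a proof entirely, so there is nothing to diverge from. You correctly match the short-leg tips to condition (ii), the hubs to condition (i), and the long-leg nodes (with the convention at the endpoint) to condition (iii) after the reindexing.

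One small caveat worth flagging: $M_\Gamma$ is defined as the set of \emph{nonzero} vectors with \emph{connected support} satisfying $(\e_i,\v)\leq 0$ for all $i$, and your proof only establishes the equivalence with the last clause. You should either invoke the convention that $\v\neq 0$ (as stated, $\v=0$ satisfies (i)--(iii) but is excluded from $M_\Gamma$) or note briefly that nonzero plus (i)--(iii) forces connected support: condition (iii) shows that along any long leg the successive differences $v_{(i;j)}-v_{(i;j+1)}$ are nonincreasing, so once a coordinate vanishes all later ones must too (otherwise the sequence would be eventually strictly increasing, contradicting $v_{(i;s_i+1)}=0$); combined with (i) and (ii) this also rules out a nonzero vector supported away from the tips or hubs. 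This is a minor omission rather than an error, but since the definition of $M_\Gamma$ carries those extra clauses, the equivalence you proved is not literally the one asserted until you address them.
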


Consider a $k$-tuple of non-zero partitions
$\muhat=(\mu^1,\dots,\mu^k)$, where $\mu^i$ has parts $\mu^i_1\geq
\mu^i_2\geq\cdots\geq\mu^i_{s_i+1}$ with $\mu^i_j$ possibly equal to
$0$. This tuple defines a dimension vector $\v_\muhat=(v_i)_{i\in
I}\in \N^I$ as follows.  Put $v_{(l)}=1$ for $l=1,\dots,r$,
$v_{(i;0)}=|\mu^i|$ and $v_{(i;j)}=|\mu^i|-\sum_{f=1}^j\mu^i_f$ for
$j=1,\dotsc ,s_i$.  Thus the long leg attached to the node
$(i;0)$ (i.e., the type $A_{s_i+1}$ graph with nodes
$(i;0),(i;1),\dots,(i;s_i)$) is labelled with a strictly decreasing
sequence of numbers, and the tips of the short leg are
labelled with $1$.

Notice that for all $i=1,\dots,k$, we have
$$
\delta_i(\v_\muhat)=r-|\mu^i|-\mu^i_1=:\delta(\mu^i),
$$
and that $\v_\mu$ satisfies already the condition Lemma \ref{fd1}
(iii). The condition (ii) is always satisfied unless $k=1$ and
$v_{(1;0)}=1$, in which case $\v_\muhat$ is a real root.  This implies
the following lemma:

\begin{lemma}Assume $k> 1$ or $v_{(1;0)}>1$. Then $\v_\mu\in M_\Gamma$
if and only if for all $i=1,\dots,k$ we have $r\geq |\mu^i|+\mu^i_1$.
\label{fd}\end{lemma}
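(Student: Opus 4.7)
The plan is to derive this lemma as a direct corollary of Lemma~\ref{fd1}, using the computations already made in the paragraph just preceding the statement. There are three conditions in Lemma~\ref{fd1} to check for $\v_\muhat$, and the hypothesis is designed precisely so that (ii) and (iii) come for free, leaving only (i) as content.

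First I would note that the support of $\v_\muhat$ is nonempty and connected. Every central vertex satisfies $v_{(l)}=1>0$; each node $(i;0)$ satisfies $v_{(i;0)}=|\mu^i|\geq 1$ since $\mu^i$ is a nonzero partition; and along the $i$-th long leg the values $v_{(i;j)}=|\mu^i|-\sum_{f\leq j}\mu^i_f$ are non-increasing, so the support restricted to the leg is an initial segment, hence connected through $(i;0)$ to the bipartite part.

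Next I would verify conditions (ii) and (iii) of Lemma~\ref{fd1}. For (iii), observe that $v_{(i;j)}-v_{(i;j+1)}=\mu^i_{j+1}$ (with the convention $\mu^i_{s_i+2}=0$), so the inequality becomes $\mu^i_{j+1}\geq \mu^i_{j+2}$, which holds because $\mu^i$ is a partition. For (ii), one computes
\[
\sum_{j=1}^{k}v_{(j;0)}=\sum_{j=1}^{k}|\mu^j|,\qquad 2v_{(l)}=2,
\]
so the condition reads $\sum_j |\mu^j|\geq 2$. Under the hypothesis this holds: if $k>1$ then $\sum_j |\mu^j|\geq k\geq 2$ since each $|\mu^j|\geq 1$, while if $k=1$ and $v_{(1;0)}>1$ then $\sum_j|\mu^j|=|\mu^1|=v_{(1;0)}\geq 2$.

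With (ii) and (iii) automatic, $\v_\muhat\in M_\Gamma$ is equivalent to condition (i), which by the formula $\delta_i(\v_\muhat)=r-|\mu^i|-\mu^i_1$ computed above is exactly the inequality $r\geq |\mu^i|+\mu^i_1$ for all $i$. There is no real obstacle here since all the genuine work sits in Lemma~\ref{fd1}; the only subtlety is the excluded degenerate case $k=1$, $v_{(1;0)}=1$, where (ii) fails and one checks separately that $\v_\muhat=\e_{(1;0)}+\sum_l\e_{(l)}$ is a real root (for instance, obtained from $\e_{(1;0)}$ by successive reflections $s_{(l)}$).
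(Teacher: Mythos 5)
Your proof is correct and follows the paper's approach exactly: the paper derives the lemma as an immediate consequence of Lemma~\ref{fd1} by observing (in the paragraph preceding the statement) that condition~(iii) holds automatically because $\mu^i$ is a partition, that condition~(ii) holds automatically except when $k=1$ and $v_{(1;0)}=1$, and that condition~(i) is precisely $\delta_i(\v_\muhat)=r-|\mu^i|-\mu^i_1\geq 0$. You have merely spelled out the verifications the paper leaves implicit, including the side remark that in the excluded case $\v_\muhat$ is a real root.
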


Recall \cite[Lemma
3.2.1]{hausel-letellier-villegas2} that if
$f=(f_\gamma)_{\gamma\in\Omega}$ is an indecomposable representation
(over an algebraically closed field) of $\Gamma$ of dimension vector
$\v$ and if $v_{(i;0)} >0$ then the linear maps $f_\gamma$, where
$\gamma$ runs over the arrows of the long leg attached to the node
$(i;0)$ are all injective. Recall also (see \S \ref{representations})
that a dimension vector $\v\in\N^I\smallsetminus \{0\}$ is a root of
$\Gamma$ if and only if there exists an indecomposable representation
of $\Gamma$ with dimension vector $\v$. We deduce the following fact:

\begin{lemma} Let $\v\in\N^I$. If $\v\in\Phi(\Gamma)$ and $v_{(i;0)} >0$ then
$v_{(i;0)}\geq v_{(i;1)}\geq v_{(i;2)}\geq\cdots\geq v_{(i;s_i)}$.
\label{monotone}\end{lemma}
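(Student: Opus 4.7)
The plan is to reduce Lemma \ref{monotone} to the cited result \cite[Lemma 3.2.1]{hausel-letellier-villegas2} together with elementary linear algebra, using the fact that all arrows on the long leg point toward the central node $(i;0)$.

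First, since $\v\in\Phi(\Gamma)$, the recalled characterization of roots (together with Kac's theorem) guarantees the existence of an indecomposable representation $\varphihat=(\varphi_\gamma)_{\gamma\in\Omega}$ of $\Gamma$ over an algebraically closed field $\kappa$ with dimension vector $\v$. Fix such a $\varphihat$.

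Next, by hypothesis $v_{(i;0)}>0$, so the cited Lemma 3.2.1 of \cite{hausel-letellier-villegas2} applies and yields that every linear map $\varphi_\gamma$ attached to an arrow $\gamma$ on the long leg ending at $(i;0)$ is injective. Because the orientation chosen in \S\ref{dandelion} points all arrows toward the vertices $(1;0),\dots,(k;0)$, the arrows on the leg attached to $(i;0)$ are precisely the maps
\[
\varphi_{j+1}\colon V^{\varphihat}_{(i;j+1)}\longrightarrow V^{\varphihat}_{(i;j)},\qquad j=0,1,\dots,s_i-1.
\]

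Finally, injectivity of $\varphi_{j+1}$ forces $\dim V^{\varphihat}_{(i;j+1)}\leq\dim V^{\varphihat}_{(i;j)}$, i.e.\ $v_{(i;j+1)}\leq v_{(i;j)}$. Chaining these inequalities for $j=0,1,\dots,s_i-1$ produces the desired monotonicity
\[
v_{(i;0)}\geq v_{(i;1)}\geq v_{(i;2)}\geq\cdots\geq v_{(i;s_i)}.
\]

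There is really no substantive obstacle here once the cited injectivity statement is invoked; the only thing to check carefully is that the orientation convention of \S\ref{dandelion} aligns ``injective'' with the correct direction of inequality, so that the tail of each arrow lies further out on the leg than its head. Since all arrows on the long leg are oriented toward $(i;0)$, this matches, and the argument closes.
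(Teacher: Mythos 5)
Your proof is correct and is precisely the deduction the paper leaves implicit: in the two sentences preceding Lemma~\ref{monotone} the paper recalls the injectivity of the long-leg maps (from \cite[Lemma 3.2.1]{hausel-letellier-villegas2}) for indecomposable representations with $v_{(i;0)}>0$, and the equivalence between being a root and admitting an indecomposable representation, then simply writes ``We deduce the following fact.'' You have spelled out that deduction, including the check that the orientation convention of \S\ref{dandelion} makes injectivity point the right way.
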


\begin{corollary}\label{cor}
Assume that $\v_\muhat$ is an imaginary root. Then $r\geq |\mu^i|$ for all $i=1,\dots,k$.
\end{corollary}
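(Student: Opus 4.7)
The plan is to assume $\v_\muhat$ is an imaginary root and, for a fixed but arbitrary $i\in\{1,\dots,k\}$, to derive $r\geq|\mu^i|$ by a one-reflection argument together with Lemma \ref{monotone}. Since every $\mu^j$ is nonzero, $\v_\muhat$ lies in $\N^I\smallsetminus\{0\}$; being imaginary, it is then a \emph{positive} imaginary root, so any simple reflection of it is again a positive imaginary root by the $W$-invariance of this set recalled above.

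Arguing by contradiction, suppose $r<|\mu^i|$. A direct computation from the definition of $\v_\muhat$ gives
\[
(\v_\muhat,\e_{(i;0)})=2v_{(i;0)}-v_{(i;1)}-\sum_{l=1}^r v_{(l)}=|\mu^i|+\mu^i_1-r>0,
\]
hence $\v':=s_{(i;0)}(\v_\muhat)$ coincides with $\v_\muhat$ away from $(i;0)$ and satisfies $v'_{(i;0)}=r-\mu^i_1$. Because $\v'$ is again a positive imaginary root, non-negativity at $(i;0)$ forces $\mu^i_1\leq r$.

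I then split into two subcases. If $r>\mu^i_1$, then $v'_{(i;0)}>0$, and Lemma \ref{monotone} applied to the root $\v'$ gives $v'_{(i;0)}\geq v'_{(i;1)}=|\mu^i|-\mu^i_1$, which rearranges to $r\geq|\mu^i|$, contradicting the standing assumption. If instead $r=\mu^i_1$, then $v'_{(i;0)}=0$ while $v'_{(i;1)}=|\mu^i|-r>0$ and $v'_{(l)}=1$ for every leaf $(l)$; since $(i;0)$ is a cut vertex separating the long leg $\{(i;1),\dots,(i;s_i)\}$ from the remaining vertices of $\Gamma$, the support of $\v'$ meets two distinct connected components of $\Gamma\smallsetminus\{(i;0)\}$, contradicting the classical fact that every root has connected support.

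The main obstacle is this second subcase, where Lemma \ref{monotone} is silent because $v'_{(i;0)}=0$. I plan either to cite the connected-support property of roots from Kac's book, or, to keep the argument self-contained, to reason directly from indecomposability: an absolutely indecomposable representation with $V'_{(i;0)}=0$ has all arrows into and out of $(i;0)$ identically zero, hence decomposes as a direct sum along the two pieces of $\Gamma\smallsetminus\{(i;0)\}$ supporting nonzero dimensions, contradicting indecomposability.
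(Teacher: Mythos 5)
Your proof is correct and follows essentially the same route as the paper's: reflect at $(i;0)$, observe that $\v' = s_{(i;0)}(\v_\muhat)$ is again a positive imaginary root with $v'_{(i;0)} = r - \mu^i_1$, and invoke Lemma~\ref{monotone} to conclude. The one place where you genuinely do more than the paper is the boundary subcase $r = \mu^i_1$. The paper's proof simply asserts ``$v'_{(i;0)} = r-\mu^i_1 > 0$'' and applies Lemma~\ref{monotone}, but positivity of the root $\v'$ only gives $v'_{(i;0)}\geq 0$ on its face, and Lemma~\ref{monotone} is stated under the hypothesis $v_{(i;0)} > 0$. You correctly notice this and close the gap: if $v'_{(i;0)}=0$ while (under the contradiction hypothesis $r<|\mu^i|$) $v'_{(i;1)} = |\mu^i|-\mu^i_1 > 0$ and all $v'_{(l)}=1$, then the support of $\v'$ is split by the cut vertex $(i;0)$, contradicting the connectedness of the support of any root — equivalently, as you say, an indecomposable representation with $V_{(i;0)} = 0$ would split along $(i;0)$. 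So your argument is a more careful version of the same proof rather than a different approach; the strict inequality the paper takes for granted is exactly the content of your second subcase.
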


\begin{proof} Since $\v=\v_\mu$ is a positive imaginary root,
$\v'=s_{(i;0)}(\v)$ is also a positive imaginary root. In particular
$v_{(i;0)}'=r-\mu^i_1>0$ and so by Lemma \ref{monotone} we must have
$r-\mu^i_1=v_0'\geq v_{(i;1)}'=v_{(i;1)}=|\mu^i|-\mu^i_1$, i.e.~$r\geq
|\mu^i|$.
\end{proof}

\begin{remark}Corollary \ref{cor} is false for real roots.  For
instance assume $k=1$, $\mu=(3,1)$ and $r=3$. Then clearly
$s_{(1;0)}(\v_\mu)$ is a real root with coordinate $0$ at the vertex
$(1;0)$ and with coordinate $1$ at the edge vertices.  Thus $\v_\mu$
is also a real root, but note that $r<|\mu|$.
\end{remark}

\section{Kac polynomial of complete bipartite supernova quivers}

\subsection{Preliminaries}

\subsubsection{Row echelon forms}\label{row}

Recall that $\kappa$ denotes an arbitrary field. Denote by $B$ the
subgroup of $\GL_n(\kappa)$ of {\bf lower} triangular matrices. Let $r\geq
n$ be an integer. Given a sequence of non-negative integers
$\s=(s_1,s_2,\dots,s_d)$ such that $\sum_i s_i=n$ we denote by $P_\s$
the unique parabolic subgroup of $\GL_n$ containing $B$ and having
$L_\s=\GL_{s_d}\times \dots\times \GL_{s_1}$ as a Levi
factor. Consider a matrix $A\in {\rm Mat}_{n,r}(\kappa)$ and
decompose its set of rows into $d$ blocks; the first block consists
of the first $s_d$ rows of $A$, the second block consists of the
following $s_{d-1}$ rows, and so on.

\begin{definition}\label{def:rowech}
We say that $A$ is in 
\emph{row echelon form} with respect to $\s$ if the following hold:
\begin{enumerate}
\item The rightmost non-zero entry in each row (called a \emph{pivot}) equals
$1$.
\item All entries beneath any pivot vanish.
\item If a block contains two pivots with coordinates $(i,j)$ and
$(i',j')$, then $i<i'$ if and only if
$j<j'$.\label{permcondition}
\end{enumerate}
\end{definition}

We have the following easy proposition, whose proof we leave to the reader:
\begin{proposition}
For any matrix $A\in {\rm Mat}_{n,r}(\kappa)$ of rank $n$ there exists
a unique $g\in P_\s$ such that $gA$ is in  row echelon form with
respect to $\s$.  \label{echelon}\end{proposition}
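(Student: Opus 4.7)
The plan is to establish existence and uniqueness simultaneously by induction on the number $d$ of blocks of $\s$. The key structural input is that $P_\s$ is block lower triangular, so left multiplication by $g \in P_\s$ acts block by block from top to bottom: the top $s_d$ rows of $gA$ depend only on the top-left Levi factor $g_d \in \GL_{s_d}$ and on the top $s_d$ rows of $A$, while each subsequent block of $gA$ can be further modified by adding arbitrary linear combinations of rows from blocks strictly above it.

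For the base case $d = 1$, where $P_\s = \GL_n$, I would work purely in terms of the row space $V \subset \kappa^r$ of $A$. For each $c \in \{0, 1, \ldots, r\}$ let $W_c \subset V$ be the subspace of vectors whose entries in columns $c+1, \ldots, r$ all vanish; then $0 = W_0 \subset W_1 \subset \cdots \subset W_r = V$, and there are exactly $n$ indices $c_1 < \cdots < c_n$ at which the dimension jumps by one. I would take these as the pivot columns, and for each $i$ construct the unique vector $v_i \in W_{c_i}$ whose entries in $c_1, \ldots, c_{i-1}$ vanish and whose entry in $c_i$ equals $1$: existence is by iteratively clearing with previously constructed $v_j$'s, and uniqueness holds because a nonzero vector in $V$ must place its rightmost nonzero entry at one of the $c_j$'s. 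The $v_i$ then assemble into the unique row echelon form, and the corresponding $g \in \GL_n$ is itself unique because $A$ has full row rank.

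For the inductive step, I would first apply the base case to the top block $B_d$ (whose $s_d$ rows are linearly independent since $A$ has rank $n$) to obtain a unique $g_d \in \GL_{s_d}$ putting it in row echelon form with pivot columns $C = \{c_1, \ldots, c_{s_d}\}$. By construction, the reduced top block has columns indexed by $C$ equal to the standard basis vectors of $\kappa^{s_d}$. Consequently, for each row $a$ in the remaining blocks there is a unique linear combination of the reduced top rows that kills the entries of $a$ in columns $C$; subtracting these is forced and uniquely determines the off-diagonal blocks $g_{j, d}$ of $g$ for $j < d$. The resulting bottom $(n - s_d)$-row matrix $A'$ has rank $n - s_d$ with zero entries in columns $C$, and the inductive hypothesis applied to $A'$ with block structure $(s_1, \ldots, s_{d-1})$ yields the remaining blocks of $g$ uniquely; the pivot columns of these lower blocks are automatically disjoint from $C$.

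The main obstacle is the base case --- specifically, verifying that the row-space construction produces a row echelon form satisfying all three conditions of Definition \ref{def:rowech}, and that there is no other such form. Once this canonical description is in hand, the inductive step is essentially bookkeeping, since the block lower triangular structure of $P_\s$ is precisely adapted to top-down block-by-block processing.
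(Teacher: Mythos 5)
The paper does not supply a proof of Proposition \ref{echelon}; it is explicitly ``left to the reader,'' so there is nothing internal to compare against. Your argument is correct in substance and fills the gap appropriately: the base case via the filtration $W_0 \subset W_1 \subset \cdots \subset W_r$ of the row space is the right intrinsic way to see both existence and uniqueness of the echelon form (and it correctly reflects the paper's ``rightmost-pivot'' convention and the requirement that entries \emph{beneath} pivots vanish), and the induction on the number of blocks, exploiting that $P_\s$ is block lower triangular, is the natural way to reduce to it.

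One organizational point is slightly loose. In the inductive step you assert that the clearing step ``uniquely determines the off-diagonal blocks $g_{j,d}$ of $g$.'' If $g$ is written as a product $g = g^{(3)} g^{(2)} g^{(1)}$, where $g^{(1)} = \mathrm{diag}(g_d, I)$ is the Levi reduction of the top block, $g^{(2)}$ is the strictly block-lower clearing, and $g^{(3)} = \mathrm{diag}(I,h)$ comes from the inductive hypothesis on $A'$, then the bottom-left block of the final $g$ is $-\,h\Lambda' g_d$ rather than $-\Lambda'$: it is not actually pinned down until $h$ is. What is pinned down at that intermediate stage is only the clearing factor $g^{(2)}$ (forced because condition (ii) of Definition \ref{def:rowech} requires the entries of lower blocks in columns $C$ to vanish, and the reduced top block restricted to $C$ is the identity). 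The conclusion that $g$ is unique is nonetheless correct: once $g_{11}$ is fixed by the base case, every lower row of $gA$ must lie in the complement $\{v : v|_C = 0\}$ of $V_1$, so projecting along $V_1$ shows that the diagonal blocks of $g$ below the first must carry $\pi(A_{\geq 2})$ to its unique echelon form, and then the remaining off-diagonal blocks are forced because $A_1$ has full row rank. A very small further remark: the claim that the pivot columns of the lower blocks avoid $C$ deserves a word (the reduced bottom matrix $A'$ has zero columns at $C$, and row operations preserve zero columns), though you flag it as automatic, which is fine.
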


\subsubsection{Bruhat decomposition}\label{bruhat}

We identify the symmetric group $S_r$ with permutation matrices in
$\GL_r$ (if $w\in S_r$, the corresponding permutation matrix $(a(w)_{ij})_{i,j}$ is defined by $a(w)_{ij}=\delta_{i,w(j)}$). Then $S_r$ acts on the maximal torus $T\simeq (\kappa^\times)^r$ of diagonal matrices as $w\cdot (t_1,\dots,t_r)=(t_{w^{-1}(1)},\dots,t_{w^{-1}(r)})$. Consider a parabolic $P_\s$ of $\GL_r$ for some sequence $\s=(s_1,\dots,s_d)$ with $\sum_is_i=r$ and denote by $S_{r,\s}$
the subset of $S_r$ of permutation matrices which are in row echelon
form with respect to $\s$. Equivalently, if we form  the partition
\begin{equation}
\{1,\dots,r\}=\{1,\dots,s_d\}\cup\{s_d+1,\dots,s_{d-1}\}\cup\cdots\cup\{r-s_1+1,\dots,r\}
\label{part}\end{equation} corresponding to our partition of the rows,
then we have $w^{-1}(i)<w^{-1}(j)$ for any $i<j$ in the same block.
Then we have the following generalized version of the Bruhat decomposition:
$$
\GL_r=\coprod_{w\in S_{r,\s}}P_\s wB.
$$

Denote by
$R$ the root system of $\GL_r$ with respect to $T$.  Recall that it is
the set $\{\alpha_{i,j} \mid 1\leq i\neq j\leq r\}$ of group
homomorphisms $\alpha_{i,j}\colon T\rightarrow\kappa^{\times}$ given
by
$$
\alpha_{i,j}(t_1,\dots,t_r)=t_i/t_j.$$ We have
$\alpha_{i,j}=\alpha_{j,i}^{-1}$ for all $i\neq j$.  The symmetric
group $S_r$ acts on $R$ by $w\cdot \alpha\colon
T\rightarrow\kappa^\times$, $(t_1,\dots,t_r)\mapsto
\alpha(t_{w(1)},\dots,t_{w(r)})$.  In particular
$w\cdot\alpha_{i,j}=\alpha_{w(i),w(j)}$ for all $i\neq
j$. Let  
\[
R^+:=\{ \alpha_{i,j}\mid 1\leq j<i\leq r\}
\]
be the set of positive roots with respect to $B$, and let $R^{-} = R
\smallsetminus R^{+}$.

For $\alpha\in R$, denote by $U_\alpha$ the unique closed one
dimensional unipotent subgroup of $\GL_r$ such that for all $t\in T$
and $g\in U_\alpha$, we have $t(g-1)t^{-1}=\alpha(t)\cdot
(g-1)$. Explicitly, if $\alpha=\alpha_{i,j}$, then the group
$U_\alpha$ consists of matrices of the form $I + xE_{i,j}$, where
$x\in \kappa$ and $E_{i,j}$ is the matrix whose only non-zero entry is
$1$ in position $(i,j)$. We denote by $R_\s\subset R$ the set of roots
$\alpha$ such that $U_\alpha$ is contained in the Levi factor
$L_\s$. For $w\in S_{r,\s}$, put
$$
U_w:=\prod_{\{\alpha \in R^+ \mid w(\alpha )\in R^-\smallsetminus R_\s\}} U_\alpha.
$$
One can show that $U_w$ is a subgroup of $\GL_r$ (see for instance
\cite[10.1.4]{Sp}).  We have the following lemma, whose proof we omit:

\begin{lemma}\label{lem:unicity}
$\phantom{a}$
\begin{enumerate}
\item Any element $g$ in the cell $P_\s wB$ can be written uniquely
as $pwu$ with $p\in P_\s$ and $u\in U_w$.
\item Any element of the form $wu$ with $u\in U_w$ is in its row echelon
form.
\end{enumerate}
\end{lemma}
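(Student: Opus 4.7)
My plan is to dispose of (2) first by direct matrix computation, then derive the uniqueness in (1) from (2) together with Proposition \ref{echelon}, and finally handle existence in (1) by a standard manipulation of root subgroups.

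For (2), I observe that a permutation matrix $w$ satisfies $(wu)_{kj} = u_{w^{-1}(k), j}$, so the entries of $wu$ are controlled entirely by $u$. By definition, $u \in U_w$ is lower-triangular unipotent with off-diagonal entries restricted to positions $(i,j)$ where $i > j$, $w(i) < w(j)$, and $w(i), w(j)$ lie in distinct blocks of $\s$. From this one reads off the three conditions of Definition \ref{def:rowech}: in row $w(j)$ the rightmost nonzero entry is the diagonal $1$ of $u$ located in column $j$ (since $u_{j,j'} = 0$ for $j' > j$), yielding (i); in column $j$ every entry in a row $k > w(j)$ vanishes, because otherwise $i := w^{-1}(k)$ would satisfy both $i > j$ and $w(i) = k > w(j)$, contradicting the description of $U_w$, yielding (ii); and if two pivots sit in rows $k_1 < k_2$ of the same block, then the corresponding columns $w^{-1}(k_1), w^{-1}(k_2)$ satisfy $w^{-1}(k_1) < w^{-1}(k_2)$ by the characterization of $w \in S_{r,\s}$ recalled just before the lemma, yielding (iii).

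For the uniqueness in (1), suppose $p_1 w u_1 = p_2 w u_2$ with $p_i \in P_\s$ and $u_i \in U_w$. Then $w u_2 = (p_2^{-1} p_1)(w u_1)$ with $p_2^{-1} p_1 \in P_\s$, and both $w u_1$ and $w u_2$ are in row-echelon form with respect to $\s$ by part (2). Proposition \ref{echelon} then forces $p_2^{-1} p_1 = 1$, and consequently $u_1 = u_2$. For existence, start from $g = p w b \in P_\s w B$ and write $b = t u$ with $t \in T \subset P_\s$ and $u \in U$, the lower-triangular unipotent subgroup. Since $w t w^{-1}$ is again diagonal and hence in $P_\s$, one gets $g = (p \cdot w t w^{-1}) w u = p' w u$ with $p' \in P_\s$. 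I then invoke the standard product decomposition of $U$ along root subgroups (the same Springer reference 10.1.4 used in the paper to conclude that $U_w$ is a subgroup): the complementary set $R_w' := \{\alpha \in R^+ \mid w(\alpha) \in R^+ \cup R_\s\}$ defines a subgroup $U_w' \subset U$ with $U = U_w' \cdot U_w$, and $w U_w' w^{-1} \subset P_\s$, since each $U_{w(\alpha)}$ lies in $B \subset P_\s$ when $w(\alpha) \in R^+$ and in $L_\s \subset P_\s$ when $w(\alpha) \in R_\s$. Writing $u = u' u_w$ accordingly and absorbing $w u' w^{-1}$ into the left factor delivers $g = p'' w u_w$.

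The main obstacle is the factorization $U = U_w' \cdot U_w$: in the noncommutative unipotent group $U$ the order in which root subgroups are multiplied matters, and one must know that both pieces are genuine closed subgroups with the asserted product decomposition and $w$-conjugation behaviour. I would rely on Springer's general structural results on reductive groups (already tacitly used in the paper to guarantee that $U_w$ is a subgroup) to handle this cleanly; once those are in hand, both halves of the lemma follow from the short arguments above.
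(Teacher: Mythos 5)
The paper omits its own proof of this lemma, so there is no argument to compare against; your proof is correct and is a natural one. A small streamlining is available in the existence step: by the paper's Lemma \ref{inv} (which holds precisely because $w\in S_{r,\s}$), the set $\{\alpha\in R^+\mid w(\alpha)\in R^-\cap R_\s\}$ is actually empty, so your $R_w'$ is just $\{\alpha\in R^+\mid w(\alpha)\in R^+\}$ and $U_w'=U\cap w^{-1}Uw$, $U_w=U\cap w^{-1}U^-w$. The factorization $U=U_w'\cdot U_w$ is then exactly the standard decomposition of $U$ used in setting up the Bruhat cells (e.g.\ Springer, \emph{Linear Algebraic Groups}, \S 8.3), and $wU_w'w^{-1}\subset B\subset P_\s$ follows immediately, so you do not need to treat the $L_\s$ case separately. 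Everything else — the direct verification of the echelon conditions for $wu$, and deducing uniqueness from part (2) together with Proposition \ref{echelon} — is sound.
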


For any $u\in U_w$ let $u_\alpha$ be its image under the projection
$U_{w}\rightarrow U_\alpha$.  The group $H_\s:=P_\s\times T$ acts on
$wU_w$ via $(p,t)\cdot wu=pwut^{-1}$.  For any group $H$ acting on a
set $X$ and any point $x\in X$, let $C_H(x)\subset H$ denote the
stabilizer of $x$.

\begin{lemma} 
For $u\in U_w$ we have
$$
C_{H_\s}(wu)\simeq C_T(u)=\bigcap_{\{\alpha\in R^+\mid w\cdot\alpha\in
R^-\smallsetminus R_\s,\, u_\alpha\neq 1\}}{\rm Ker}\,\alpha.
$$
\label{ker}
\end{lemma}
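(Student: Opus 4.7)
The plan is to reduce the stabilizer condition to a torus calculation, and then carry out that torus calculation via the root decomposition of $u$. Starting from $pwut^{-1}=wu$, equivalently $p=w(utu^{-1})w^{-1}$, I would write $ut=t\cdot u'$ with $u':=t^{-1}ut$, so that
\[
p=(wtw^{-1})\cdot(wu'u^{-1}w^{-1}).
\]
The first factor is automatically in $T\subset L_\s\subset P_\s$; since each root subgroup $U_\alpha$ is $T$-stable, $U_w$ is $T$-stable, so $u'u^{-1}\in U_w$ and hence $wu'u^{-1}w^{-1}\in wU_ww^{-1}$. By definition the latter group has root support $R^-\smallsetminus R_\s$. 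In concrete matrix terms its nonzero entries sit strictly above the block diagonal and outside the Levi blocks of $L_\s$, whereas $P_\s$ is block-lower-triangular; hence $P_\s\cap wU_ww^{-1}=\{1\}$. This forces $u'u^{-1}=1$, i.e.\ $tu=ut$, so $t\in C_T(u)$, and $p=wtw^{-1}$ is then determined by $t$. Conversely, any $t\in C_T(u)$ produces a stabilizer pair $(wtw^{-1},t)$, so projection onto the $T$-factor gives the desired isomorphism $C_{H_\s}(wu)\simeq C_T(u)$.

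To compute $C_T(u)$ explicitly, I would use the standard uniqueness of the root decomposition $u=\prod_\alpha u_\alpha$ in $U_w$ (for any fixed total ordering of the roots indexing $U_w$). Parametrising $u_\alpha\in U_\alpha$ by $x_\alpha\in\kappa$, the defining relation $t(g-1)t^{-1}=\alpha(t)(g-1)$ yields that $tu_\alpha t^{-1}$ is the element of $U_\alpha$ with parameter $\alpha(t)x_\alpha$. Conjugation by $t$ is a group automorphism, so $tut^{-1}=\prod_\alpha tu_\alpha t^{-1}$ in the same order; by uniqueness of the factorisation, $tut^{-1}=u$ iff $\alpha(t)x_\alpha=x_\alpha$ for every $\alpha$, iff $\alpha(t)=1$ whenever $u_\alpha\neq 1$. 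This yields the stated intersection of kernels, the indexing set being forced by $u\in U_w$.

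I expect the main obstacle to be the clean verification that $P_\s\cap wU_ww^{-1}=\{1\}$. Morally this is just the disjointness of the root sets $R^+\cup R_\s$ (supporting $P_\s$) and $R^-\smallsetminus R_\s$ (supporting $wU_ww^{-1}$), but to make it airtight I would use the block-triangular matrix picture: an element of $wU_ww^{-1}$ is strictly upper-triangular unipotent with nonzero entries only in positions above the block diagonal of $L_\s$, so it can lie in the block-lower-triangular $P_\s$ only if it is the identity. With that observation in hand, the rest of the argument is short and mechanical.
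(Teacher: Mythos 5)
Your proof is correct and follows essentially the same route as the paper: rearrange the stabilizer equation so that uniqueness of the $P_\s\,w\,U_w$ factorization forces $p=wtw^{-1}$ and $tut^{-1}=u$, then read off $C_T(u)$ from the root decomposition of $u$. The only cosmetic difference is that where the paper cites Lemma~\ref{lem:unicity} directly, you re-derive the needed triviality of $P_\s\cap wU_ww^{-1}$ from the block structure — which is exactly what underlies that lemma.
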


\begin{proof} Let $(p,t)\in H_\s$ such that $pwut^{-1}=wu$. Then 
$$
(pwt^{-1}w^{-1})\, w\, (tut^{-1})=wu.
$$
By Lemma \ref{lem:unicity} this identity is equivalent to $p=wtw^{-1}$
and $tut^{-1}=u$ as $T$ normalizes $U_w$. But $tut^{-1}=u$ holds if
and only if for all $\alpha$ we have $tu_\alpha t^{-1}=u_\alpha$ which
identity is equivalent to $t\in {\rm Ker}\,\alpha$ when $u_\alpha\neq
1$.
\end{proof}

\begin{lemma} For any $w\in S_{r,\s}$ we have
\begin{align*}
\{\alpha\in R^+\mid w\cdot \alpha\in R^-\smallsetminus R_\s\}&=\{\alpha\in R^+\mid w\cdot\alpha\in R^-\}\\
&=\{\alpha_{i,j}\mid j<i,\, w(j)>w(i)\}.
\end{align*}
\label{inv}\end{lemma}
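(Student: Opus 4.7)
The plan is to prove the two equalities separately, handling the second by direct computation from the $S_r$-action on $R$, and the first by invoking the row-echelon constraint that defines $S_{r,\s}$.

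For the second equality $\{\alpha\in R^+\mid w\cdot\alpha\in R^-\}=\{\alpha_{i,j}\mid j<i,\, w(j)>w(i)\}$, I would simply unpack definitions. A root $\alpha_{i,j}$ lies in $R^+$ precisely when $j<i$, and from the formula $w\cdot\alpha_{i,j}=\alpha_{w(i),w(j)}$ recalled in \S\ref{bruhat}, membership of $w\cdot\alpha_{i,j}$ in $R^-$ is equivalent to $w(j)>w(i)$. This is immediate.

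For the first equality only the inclusion $\supseteq$ requires argument, which I would handle by contradiction. Suppose $\alpha_{a,b}\in R^+$ (so $b<a$) satisfies $w\cdot\alpha_{a,b}=\alpha_{w(a),w(b)}\in R^-\cap R_\s$. Set $i=w(a)$ and $j=w(b)$. Membership in $R^-$ gives $i<j$, while membership in $R_\s$ forces $i$ and $j$ to lie in the same block of the partition of $\{1,\dots,r\}$ associated to $\s$, since $R_\s$ consists exactly of those $\alpha_{i,j}$ for which the corresponding root subgroup sits inside the block-diagonal Levi $L_\s$. Now the defining property of $S_{r,\s}$ — derived from condition (\ref{permcondition}) of Definition \ref{def:rowech} applied to the permutation matrix of $w$ — asserts that within any block $i<j$ implies $w^{-1}(i)<w^{-1}(j)$. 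Applied to our $i,j$ this yields $a=w^{-1}(i)<w^{-1}(j)=b$, contradicting $b<a$.

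The only delicate point, and the one I would be most careful about in writing this up, is bookkeeping with conventions: namely, the fact that the pivot of row $i$ in the permutation matrix of $w$ lies in column $w^{-1}(i)$ (so that the row-echelon condition on matrices becomes the combinatorial condition $w^{-1}(i)<w^{-1}(j)$ for $i<j$ in the same block), and the block description $R_\s=\{\alpha_{i,j}\mid i,j\text{ in the same block of }\s\}$ coming from the Levi structure of $L_\s$. Once these translations are made explicit, the argument reduces to the one-line contradiction above and no further obstacle remains.
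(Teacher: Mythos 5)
Your proof is correct and takes essentially the same route as the paper: the second equality is definitional, and the substantive first equality is established by showing that if $\alpha_{a,b}\in R^+$ has $w\cdot\alpha_{a,b}\in R^-$, then the row-echelon characterization of $S_{r,\s}$ (i.e.\ $w^{-1}(i)<w^{-1}(j)$ whenever $i<j$ lie in the same block) forbids $w(a)$ and $w(b)$ from lying in a common block, so $w\cdot\alpha_{a,b}\notin R_\s$. The paper states this same argument contrapositively and a bit more tersely, but the content is identical, including the identification $R_\s=\{\alpha_{i,j}\mid i,j\text{ in the same block}\}$.
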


\begin{proof} Only the first equality requires proof.  If
$\alpha_{i,j}\in R^+$ and $w\cdot\alpha_{i,j}\in R^-$, i.e., $j<i$ and
$w(i)<w(j)$, then by definition of $S_{r,\s}$ we cannot have
$w(i)$ and $w(j)$ in the same block of the partition
\eqref{part}, i.e., $U_{w\cdot\alpha_{i,j}}=wU_{\alpha_{i,j}}w^{-1}$
is not contained in $L_\s$. We have thus proved that the right hand
side of the first equality is contained in the left hand side.  The
reverse inclusion is easy.
\end{proof}

To simplify notation, we put $U_{i,j}=U_{\alpha_{i,j}}$, so that
$$
U_w=\prod_{j<i,\, w(i)<w(j)}U_{i,j}.
$$

\begin{definition}\label{def:invgraph}
For any $k$-tuple $\w=(w_1,\dots,w_k)\in (S_r)^k$, we denote by $K_\w$
the \emph{inversion graph} of $\w$. Namely, the vertices of $K_\w$ are
labelled by $1,2,\dots,r$ and for any two vertices $i$ and $j$ such
that $j<i$, put an edge from $i$ to $j$ for each $w_{t}$ in $\w $ 
such that $w_t(i)<w_t(j)$.  Thus $K_{\w}$ can have multiple edges.  We
can think of each edge as having one of $k$ possible colors. 

For $\u=(u_1,\dots,u_k)\in U_\w:=U_{w_1}\times\cdots\times U_{w_k}$,
we denote by $K_{\w,\u}$ the subgraph of $K_\w$ that for any pair
$i,j$ includes the edge colored $t$ between vertices $i$ and $j$ if
$(u_t)_{i,j}\neq 1$.
\end{definition}

Denote by $Z_r$ the center of $\GL_r$ and let $T$ act diagonally by
conjugation on $U_\w$.

\begin{proposition} 
For $\u\in U_\w$ we have $C_T(\u)=Z_r$ if and only if the graph
$K_{\w,\u}$ is connected.  \label{connected}
\end{proposition}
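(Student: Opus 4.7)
The plan is to reduce the statement to a direct computation of the $T$-stabilizer in terms of the kernels of roots, and then to reinterpret the resulting condition combinatorially in terms of the graph $K_{\w,\u}$.

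First, I would observe that since $T$ acts diagonally on $U_\w = U_{w_1}\times\cdots\times U_{w_k}$, we have
$$
C_T(\u) = \bigcap_{t=1}^k C_T(u_t).
$$
For each factor, Lemma \ref{ker} (applied with $\s$ such that $P_\s = B$, so $R_\s = \emptyset$) combined with Lemma \ref{inv} identifies
$$
C_T(u_t) = \bigcap_{\{\alpha_{i,j} \,\mid\, j<i,\ w_t(j)>w_t(i),\ (u_t)_{\alpha_{i,j}}\neq 1\}} \ker \alpha_{i,j}.
$$
Taking the intersection over $t = 1,\dots,k$, the index set becomes precisely the set of edges of $K_{\w,\u}$, where an edge of color $t$ between $i$ and $j$ (with $j<i$) contributes the condition $\alpha_{i,j}(t)=1$, i.e.\ $t_i = t_j$. (Alternatively, one can prove this computation directly in one line: for $u = I + x E_{i,j} \in U_{\alpha_{i,j}}$ and $t=\mathrm{diag}(t_1,\dots,t_r)$, conjugation gives $tut^{-1} = I + (t_i/t_j)x E_{i,j}$, so the stabilizer is $\ker \alpha_{i,j}$.)

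Next I would reinterpret this set-theoretically. Writing $t=\mathrm{diag}(t_1,\dots,t_r)\in T$, the above shows
$$
C_T(\u) = \{t\in T \mid t_i = t_j \text{ whenever $i$ and $j$ are joined by an edge in } K_{\w,\u}\}.
$$
Let $\sim$ be the equivalence relation on $\{1,\dots,r\}$ generated by the edges of $K_{\w,\u}$. Then $t \in C_T(\u)$ if and only if $t_i = t_j$ whenever $i\sim j$; equivalently, $C_T(\u)$ is the subtorus whose coordinates are constant on each connected component of $K_{\w,\u}$. In particular $\dim C_T(\u)$ equals the number of connected components of $K_{\w,\u}$.

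Finally, I would conclude: $Z_r = \{\mathrm{diag}(\lambda,\dots,\lambda)\mid \lambda\in\kappa^\times\}$ consists of those $t\in T$ that are constant on the trivial one-block partition of $\{1,\dots,r\}$. Therefore $C_T(\u) = Z_r$ holds if and only if $\sim$ has a single equivalence class, which happens if and only if $K_{\w,\u}$ is connected. The only nontrivial input here is Lemma \ref{ker}; otherwise the argument is essentially bookkeeping, and I do not anticipate any real obstacle.
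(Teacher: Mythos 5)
Your proof is correct and follows the same line of reasoning as the paper's, which simply observes that $\ker\alpha_{i,j}$ is the subtorus $\{t\in T : t_i = t_j\}$ and leaves the bookkeeping implicit. Your version makes the reduction via Lemma \ref{ker} (together with Lemma \ref{inv}) and the passage to connected components explicit, but no new idea is introduced.
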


\begin{proof}
This is clear  since ${\rm
Ker}\,\alpha_{i,j}$ is the subtorus of elements $(t_1,\dots,t_r)$ such
that $t_i=t_j$.
\end{proof}

\subsection{Computing the Kac polynomials}
\label{Kac-polyns}
Now $\Gamma$ is as in \S \ref{dandelion}. We want to investigate the
polynomial $A_{\Gamma,\v}(t)$. Recall (see Theorem \ref{kac2}) that
$A_{\Gamma,\v}(t)=1$ if $\v$ is a real root and $A_{\Gamma,\v}(t)=0$
if $\v$ is not a root. Moroever $A_{\Gamma,\v}(t)$ is invariant under
the Weyl group action. We are reduced to study the polynomials
$A_{\Gamma,\v}(t)$ with $\v$ is in the fundamental domain $M_\Gamma$. Here we
restrict our study to the case where $\v\in M_\Gamma$ is of the form $\v=\v_\muhat$
for some partition $\muhat$. The important thing for our approach is that
the coordinates of $\v_\muhat$ at the vertices $(j)$, $j=1,\dots,r$, equal
$1$.

Fix once for all a multi-partition $\muhat=(\mu^1,\dots,\mu^k)$ as in
\S \ref{dandelion}, and to alleviate the notation put
$n_i:=|\mu^i|$. We assume that $\v_\muhat$ is in $M_\Gamma$, and so
that $r\geq n_i+\mu^i_1$ for all $i=1,\dots,k$ (see Lemma \ref{fd}).
\bigskip

For a partition $\mu=(\mu_1,\dots,\mu_s)$, we denote by $P_\mu$ the
parabolic subgroup of $\GL_{|\mu|}$ as defined in \S \ref{row} and we denote simply by $S_\mu$
the subset $S_{|\mu|,\mu}$ of the symmetric group $S_{|\mu|}$ as
defined in \S \ref{bruhat}.

\begin{proposition}\label{span}
Assume $\varphihat\in{\rm
Rep}_{\Gamma,\v_\mu}(\kappa)$ is indecomposable. Then
\begin{enumerate}
\item the maps $\varphi_\gamma$, where $\gamma$ runs over the arrows on the $k$ long legs, are all
injective, and
\item for each $i=1,\dots,k$, the images of $\varphi_{(j)\rightarrow (i;0)}$,  with $j=1,\dots,r$,   span $V_{(i;0)}^{\varphihat}$.
\end{enumerate}
\end{proposition}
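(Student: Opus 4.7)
Part (1) is precisely the statement recalled just before Lemma~\ref{monotone}, namely \cite[Lemma 3.2.1]{hausel-letellier-villegas2}, applied at each node $(i;0)$; the hypothesis $v_{(i;0)} = |\mu^i| > 0$ is satisfied since every $\mu^i$ is nonzero.

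For part (2) the plan is an argument by contradiction. Fix $i$ and suppose
\[
W_i := \sum_{j=1}^{r}\operatorname{Im}\bigl(\varphi_{(j)\to(i;0)}\bigr) \subsetneq V^{\varphihat}_{(i;0)}.
\]
Using part (1), I first identify each $V^{\varphihat}_{(i;m)}$ with its image in $V^{\varphihat}_{(i;0)}$ under the composition of the injective long-leg maps, producing a descending filtration $V^{\varphihat}_{(i;0)} \supseteq V^{\varphihat}_{(i;1)} \supseteq \cdots \supseteq V^{\varphihat}_{(i;s_i)}$. The key linear-algebra step is then to find a direct-sum decomposition $V^{\varphihat}_{(i;0)} = W_i \oplus B$ with $B \neq 0$ such that $V^{\varphihat}_{(i;m)} = (V^{\varphihat}_{(i;m)} \cap W_i) \oplus (V^{\varphihat}_{(i;m)} \cap B)$ for every $m$. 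This will be achieved by choosing a basis of $V^{\varphihat}_{(i;0)}$ simultaneously adapted to $W_i$ and to the filtration, built from the bottom $V^{\varphihat}_{(i;s_i)}$ upward by extending bases of the intersections $W_i \cap V^{\varphihat}_{(i;m)}$ and of chosen complements within each $V^{\varphihat}_{(i;m)}$.

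Given such a decomposition, I construct two subrepresentations of $\varphihat$. The first, $\varphihat_1$, agrees with $\varphihat$ at every vertex outside the long leg at $i$, takes the value $W_i$ at $(i;0)$, and takes $V^{\varphihat}_{(i;m)} \cap W_i$ at each $(i;m)$ with $m \geq 1$. The second, $\varphihat_2$, is zero at every vertex outside that long leg, takes $B$ at $(i;0)$, and takes $V^{\varphihat}_{(i;m)} \cap B$ at each $(i;m)$. The subrepresentation axioms hold for $\varphihat_1$ because every short-leg map into $(i;0)$ lands in $W_i$ by the definition of $W_i$, and for $\varphihat_2$ because its short-leg components are zero; the long-leg maps in both are restrictions of the original injective maps and are compatible with the filtration precisely by the linear-algebra step. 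By construction $\varphihat = \varphihat_1 \oplus \varphihat_2$, and both summands are nonzero (the short legs force $\varphihat_1 \neq 0$, and $B \neq 0$ forces $\varphihat_2 \neq 0$), contradicting indecomposability and forcing $W_i = V^{\varphihat}_{(i;0)}$.

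The main obstacle is producing the filtration-compatible complement in the linear-algebra step; once it is in place, the construction of the two subrepresentations and the verification of the direct-sum decomposition are routine. The argument is characteristic-free, valid over any field $\kappa$, and uses part~(1) in an essential way to convert the long leg into an honest descending filtration of $V^{\varphihat}_{(i;0)}$ that can be split together with the subspace $W_i$.
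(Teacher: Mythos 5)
Your proof is correct and follows essentially the same strategy as the paper's, but it is actually more careful at the one delicate point. Part~(1) is indeed simply cited from \cite[Lemma 3.2.1]{hausel-letellier-villegas2}, exactly as you say; the paper's proof only addresses part~(2). For part~(2), both you and the paper split off a subrepresentation supported on the $i$-th long leg by complementing $W_i$ inside $V_{(i;0)}^{\varphihat}$. The paper, however, writes ``let $W_{(i;0)}'$ be \emph{any} subspace such that $V_{(i;0)}^{\varphihat}=W_{(i;0)}\oplus W_{(i;0)}'$'' and then takes preimages of $W_{(i;0)}$ and $W_{(i;0)}'$ along the leg; for an arbitrary complement these preimages need not span the spaces $V_{(i;p)}^{\varphihat}$ (already for the diagonal embedding $\kappa\hookrightarrow\kappa^2$ with $W,W'$ the two coordinate lines, both preimages are zero), so ``any'' is too strong. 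The complement must be chosen compatibly with the descending chain $V_{(i;s_i)}^{\varphihat}\subseteq\cdots\subseteq V_{(i;1)}^{\varphihat}\subseteq V_{(i;0)}^{\varphihat}$ obtained from part~(1). Your ``adapted basis built from the bottom up'' argument is exactly the standard way to produce such a compatible complement and cleanly fills this small gap; the inductive step is sound (if $B_{m+1}$ complements $W_i\cap V_{(i;m+1)}$ in $V_{(i;m+1)}$, extend it to a complement $B_m\supseteq B_{m+1}$ of $W_i\cap V_{(i;m)}$ in $V_{(i;m)}$; then $B_m\cap V_{(i;m+1)}=B_{m+1}$ automatically). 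The verification that both summands are genuine subrepresentations and are nonzero is routine, as you say, and matches the paper. So: same route, but your write-up makes explicit the filtration-compatibility that the paper leaves implicit.
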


\begin{proof} 
Let us prove (ii). Let $W_{(i;0)}$ be the subspace generated by the
images of the maps $\varphi_{(j)\rightarrow(i;0)}$ with
$j=1,\dots,r$. If $W_{(i;0)}\subsetneq V_{(i;0)}^\varphi$ we define
subspaces $U_{(i;1)}, U_{(i;2)},\dots, U_{(i;s_i)}$ by
$U_{(i;1)}:=\varphi_{(i;1)\rightarrow(i;0)}^{-1}(W_{(i;0)})$,
$U_{(i;p)}:=\varphi_{(i;p)\rightarrow(i;p-1)}^{-1}(U_{(i;p-1)})$. Let
$\varphihat'$ be the restriction of $\varphihat$ to $$W_{(i;0)}\oplus
\bigoplus_{j=1}^rV_{(j)}^{\varphihat}\oplus\bigoplus_{p=1}^{s_i}U_{(i;p)}\oplus\bigoplus_{f\neq
i}\bigoplus_{j=1}^{s_f}W_{(f;j)}^{\varphihat}.$$Let $W_{(i;0)}'$ be
any subspace such that $V_{(i;0)}^{\varphihat}=W_{(i;0)}\oplus
W_{(i;0)}'$ and define subspaces $U_{(i;j)}'\subset
V_{(i;j)}^{\varphihat}$ by taking the inverse images of
$W_{(i;0)}'$. Then define $\varphihat''$ as the restriction of
$\varphihat$ to $$W_{(i;0)}'\oplus\bigoplus_{p=1}^{s_i}U_{(i;p)}'.$$
Clearly $\varphihat=\varphihat'\oplus\varphihat''$. Hence we must have
$W_{(i;0)}=V_{(i;0)}^{\varphihat}$.
\end{proof}

We denote by $\X_\muhat=\X_\muhat(\kappa)$ the subset of
representations $\varphihat=(\varphi_\gamma)_{\gamma\in\Omega}\in{\rm
Rep}_{\Gamma,\v_\mu}(\kappa)$ that satisfy the conditions (i) and (ii)
in Proposition \ref{span}. As in \S \ref{representations} we identify
${\rm Rep}_{\Gamma,\v_\mu}(\kappa)$ with spaces of matrices and so for
each $i=1,\dots,k$, the coordinates $\varphi_{(1)\rightarrow
(i;0)},\dots,\varphi_{(r)\rightarrow (i;0)}$ of any $\varphihat\in
\X_\muhat$ are identified with non-zero vectors in $\kappa^{n_i}$
which form the columns of a matrix in ${\rm Mat}_{n_i,r}$ of rank
$n_i$. For a partition $\mu=(\mu_1,\dots,\mu_s)$ of $n$, denote by
$G_\mu$ the group $\GL_n\times\GL_{n-\mu_1}\times
\GL_{n-\mu_1-\mu_2}\times\cdots\times\GL_{\mu_s}$. Let $G_\muhat$ be
the subgroup $\prod_{i=1}^kG_{\mu^i}$ of $G_{\v_\mu}$ and denote by
$T$ the $r$-dimensional torus $(\GL_1)^r$. Note that $G_{\v_\mu}\simeq
G_\muhat\times T$.

Denote by $\X_\muhat/G_\muhat$ the set of $G_\muhat$-orbits
of $\X_\muhat$. Since the actions of $G_\muhat$ and $T$ on ${\rm
Rep}_{\Gamma,\v_\muhat}$ commute, we have an action of $T$ on
$\X_\muhat/G_\muhat$.
\bigskip

For $i=1,\dots,k$, put $\mu^i_0:=r-n_i$. Note that
$\tilde{\mu}{^i}:=(\mu_0^i,\mu_1^i,\dots,\mu^i_{s_i})$ is a partition
of $r$, i.e., $\mu_0^i\geq\mu^i_1$. Consider
$$
S_{\tilde{\muhat}}:=S_{\tilde{\mu}^1}\times\cdots\times S_{\tilde{\mu}^k} \subset (S_r)^k,
$$
where $S_\mu$ is defined as in the paragraph preceding Proposition
\ref{span}.

\begin{proposition} 
We have a $T$-equivariant bijection
\begin{equation}\label{eq:bijection}
\X_\muhat/G_\muhat\overset{\sim}{\longrightarrow}\coprod_{\w\in
S_{\tilde{\muhat}}}\w U_\w,
\end{equation}
where $T$ acts on $\w U_\w$ as $t\cdot (w_1u_1,\dots,w_ku_k)=(w_1tu_1t^{-1},\dots,w_ktu_kt^{-1})$. 

\begin{remark} By Lemma \ref{lem:unicity} the right hand side of
  (\ref{eq:bijection}) is isomorphic to $\prod_{i=1}^k
  \GL_r/P_{\tilde{\mu}{^i}}$ on which $T$-acts diagonally by left
  multiplication.

\label{rem:multiflags}\end{remark}

\label{prop1}\end{proposition}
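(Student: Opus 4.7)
The strategy is to analyze the orbit space one long leg at a time and exhibit each $G_{\mu^i}$-orbit as a $P_{\mu^i}$-coset of full-rank $n_i\times r$ matrices, which by Proposition \ref{echelon} is in canonical bijection with $w_iU_{w_i}$ for a unique $w_i\in S_{\tilde{\mu}^i}$. Since $G_\muhat=\prod_i G_{\mu^i}$ and each $G_{\mu^i}$ acts nontrivially only on the arrows of the $i$-th long leg together with the short-leg arrows terminating at $(i;0)$, the $G_\muhat$-action decomposes into $k$ independent pieces; it suffices to produce, for each $i$, a $T$-equivariant bijection between the $G_{\mu^i}$-orbit space of the $i$-th leg data and $\coprod_{w_i\in S_{\tilde{\mu}^i}}w_iU_{w_i}$.

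Fix $i$. The $i$-th leg data consists of a full-rank matrix $A_i\in{\rm Mat}_{n_i, r}(\kappa)$ (with columns $\varphi_{(j)\to (i;0)}$ for $j=1,\ldots,r$) together with the chain of injective linear maps $f_1,\ldots,f_{s_i}$ along the long leg. The chain $f_\bullet$ defines a partial flag $F^{(i)}_\bullet$ in $\kappa^{n_i}$ via $F^{(i)}_j:={\rm Im}(f_1\circ\cdots\circ f_j)$, of dimensions $v_{(i;0)}>v_{(i;1)}>\cdots>v_{(i;s_i)}$. The subgroup $\{1\}\times\GL_{v_{(i;1)}}\times\cdots\times\GL_{v_{(i;s_i)}}$ of $G_{\mu^i}$ acts trivially on $A_i$ and on $F^{(i)}_\bullet$ and transitively on those tuples $f_\bullet$ producing a prescribed $F^{(i)}_\bullet$; hence the orbit under this subgroup is captured by the pair $(A_i,F^{(i)}_\bullet)$, on which the residual $\GL_{n_i}$ factor acts diagonally by $A_i\mapsto gA_i$ and $F^{(i)}_\bullet\mapsto gF^{(i)}_\bullet$.

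Since $\GL_{n_i}$ acts transitively on the partial flags in $\kappa^{n_i}$ of type $\mu^i$, we use the $\GL_{n_i}$-factor to transport $F^{(i)}_\bullet$ to the standard flag fixed by $P_{\mu^i}$. The $G_{\mu^i}$-orbit of the leg data is thus represented by $A_i$ modulo left multiplication by $P_{\mu^i}$. By Proposition \ref{echelon}, each such $P_{\mu^i}$-orbit contains a unique row echelon representative with respect to $\mu^i$. The key final observation is that every row echelon $n_i\times r$ matrix extends uniquely to an $r\times r$ row echelon matrix with respect to $\tilde{\mu}^i$ by adjoining $r-n_i=\mu^i_0$ new rows at the bottom, one for each non-pivot column $q_1<\cdots<q_{r-n_i}$ with a single $1$ in column $q_l$ as the pivot of the $l$-th new row. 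By Lemma \ref{lem:unicity}(ii), these $r\times r$ row echelon matrices are exactly the elements $w_iu_i\in\GL_r$ with $w_i\in S_{\tilde{\mu}^i}$ and $u_i\in U_{w_i}$. Taking the product over $i$ then yields \eqref{eq:bijection}.

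For $T$-equivariance, note that $T\subset G_{\v_\muhat}$ rescales the columns of each $A_i$, giving $A_i\mapsto A_i\cdot t^{-1}$; on the $\GL_r$ side this becomes right multiplication by $t^{-1}$ on $w_iu_i$. Writing $w_iu_it^{-1}=(w_it^{-1}w_i^{-1})\cdot w_i\cdot(tu_it^{-1})$ and using $w_it^{-1}w_i^{-1}\in T\subset L_{\tilde{\mu}^i}\subset P_{\tilde{\mu}^i}$, the class of $w_iu_i$ modulo $P_{\tilde{\mu}^i}$ is sent to the class of $w_i(tu_it^{-1})$, which matches the action stated in the proposition. The main technical hurdle is the third step above: verifying that the proposed extension of a row echelon $n_i\times r$ matrix indeed lies in $S_{r,\tilde{\mu}^i}\cdot U_{w_i}$ requires a careful compatibility check between the block structure of $\tilde{\mu}^i$ and the definition of $S_{r,\tilde{\mu}^i}$ from \S\ref{bruhat}, including the placement of the extra block $\mu^i_0=r-n_i$ and the various orientation conventions relating row echelon forms to the Bruhat decomposition.
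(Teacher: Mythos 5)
Your proof follows essentially the same route as the paper's: pass to the data of a partial flag plus a full-rank $n_i\times r$ matrix on each long leg, standardize the flag with $\GL_{n_i}$, reduce the matrix to row echelon form via Proposition \ref{echelon}, extend uniquely to an $r\times r$ row echelon matrix landing in a Bruhat cell $w_iU_{w_i}$ (which is exactly the paper's Example \ref{ex:completion}), and check $T$-equivariance with the identity $w u t^{-1}=(w t^{-1}w^{-1})\, w\,(t u t^{-1})$. The leg-by-leg decomposition and the explicit description of the bottom-row completion are spelled out a bit more, but the argument, its intermediate reductions, and the key lemmas invoked all match the paper's.
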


\begin{proof}

We first explain how to construct the bijection
\eqref{eq:bijection}. For each $i=1,\dots,k$, denote by $\calF_{\mu^i}$ the set of partial flags
of $\kappa$-vector spaces
$$
\{0\}\subset E^{s_i}\subset \cdots\subset E^1\subset E^0=\kappa^{n_i}
$$
such that ${\rm dim}\, E^j=n_i-\sum_{f=1}^j\mu^i_f$.  Let $G_{\mu^i}'\subset
G_{\mu^i}$ be the subgroup $\GL_{n_i-\mu^i_1}\times\cdots\times\GL_{\mu^i_{s_i+1}}$ and put $G_\muhat'=\prod_{i=1}^kG_{\mu^i}'$. Let ${\rm Mat}_{n_i,r}'\subset {\rm Mat}_{n_i,r}$ be the subset of
matrices of rank $n_i$. Then we have a natural $\GL_{n_1}\times\cdots\times\GL_{n_k}$-equivariant
bijection
\begin{equation}
\X_\muhat/G_{\muhat}'\simeq \prod_{i=1}^k\left(\calF_{\mu^i}\times {\rm Mat}_{n_i,r}'\right)
\label{bij}
\end{equation} 
that takes a representation
$\varphihat\in\X_\muhat$ to
$(F_{\varphihat}^i,\varphi_{(1)\rightarrow(i;0)},\dots,\varphi_{(r)\rightarrow (i;0)})$; here
$F_{\varphihat}^i$ is the partial flag obtained by taking the images of
the compositions of the $\varphi_\gamma$, where $\gamma$ runs over
the arrows of the $i$-th long leg.

Now fix an element $\varphihat\in\X_\muhat$, and denote by
$(F_{\varphihat},M_{\varphihat})$ its image in 

$$\left(\prod_i\calF_{\mu^i}\right)\times\left(\prod_i {\rm
Mat}_{n_i,r}'\right)$$
via \eqref{bij}. Since we are only interested in the
$G_\muhat$-orbit of $\varphihat$, after taking a $G_\muhat$-conjugate of
$\varphihat$ if necessary we may assume that the stabilizer of
$F_{\varphihat}^i$ is the parabolic subgroup $P_{\mu^i}$ of $\GL_{n_i}$. By
Lemma \ref{echelon} we may further assume that for all $i=1,\dots,k$, the $i$-th coordinate $M_{\varphihat}^i$ of $M_{\varphihat}$ is in
its row echelon form with respect to $(\mu^i_1,\mu^i_2,\dots,\mu^i_{s_i+1})$, this time taking a conjugate $p\cdot
M_{\varphihat}^i$ with $p\in P_{\mu^i}$ if necessary.  It is easy to see
that there is a unique way to complete the matrix $M_{\varphihat}^i$ to
a matrix $\tilde{M}{_{\varphihat}^i}\in\GL_r$ that is in row echelon form with
respect to $(\mu^i_0,\mu^i_1,\dots,\mu^i_{s_i+1})$.
(cf.~Example~\ref{ex:completion}).

Now the pivots of $\tilde{M}{_{\varphihat}^i}$ form a permutation matrix
$w_{\varphihat}^i\in S_{\tilde{\mu}{^i}}$ and $\tilde{M}{_{\varphihat}^i}\in
w_{\varphihat}^iU_{w_{\varphihat}^i}$. We thus defined a map
$X_\muhat/G_\muhat\rightarrow \prod_{i=1}^k\left(\coprod_{w\in S_{\tilde{\mu}{^i}}}wU_w\right)$. The inverse map
is obtained by truncating the last $\mu_0^i$ rows in each coordinate. The fact that the inverse map is $T$-equivariant is easy to
see from the relation $wtut^{-1}=(wtw^{-1})\cdot wu\cdot t^{-1}$.
\end{proof}

\begin{example}\label{ex:completion}
For example, suppose $\s=(1,1)$ and 
$$A=\left(\begin{array}{ccccc}
*&*&1&0&0\\
*&1&0&0&0\end{array}\right).$$
Then
$$
\tilde{A}=\left(\begin{array}{ccccc}
*&*&1&0&0\\
*&1&0&0&0\\
1&0&0&0&0\\
0&0&0&1&0\\
0&0&0&0&1\end{array}\right)$$ is the completion of $A$ to the
corresponding echelon form with respect to $(3,1,1)$.
\end{example}

\begin{proposition} Let $\varphihat\in\X_\muhat$ and let $\w\in S_{\tilde{\muhat}}$, $\u\in U_\w$ such that the image of $\varphihat$ under (\ref{eq:bijection}) is $\w\u$. Then the following assertions are equivalent.

(i) $\varphihat$ is absolutely indecomposable,

(ii) $C_{G_{\v_\muhat}}(\varphihat)=Z_{\v_\muhat}$,

(iii) the graph $K_{\w,\u}$ is connected.
\label{ab}\end{proposition}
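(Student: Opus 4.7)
The strategy is to leverage the $T$-equivariant bijection $\X_\muhat/G_\muhat \simeq \coprod_{\w \in S_{\tilde{\muhat}}} \w U_\w$ of Proposition~\ref{prop1} to reduce the entire stabilizer computation to the toric question already handled in Proposition~\ref{connected}. Write $G_{\v_\muhat} = G_\muhat \times T$ and observe that scalars in $Z_{\v_\muhat}$ project isomorphically onto $Z_r \subset T$.

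First, I would show that $C_{G_\muhat}(\varphihat)$ is trivial for every $\varphihat \in \X_\muhat$. Since $G_\muhat$ has no components at the leaf vertices $(1),\dots,(r)$, an element $g = (g_{(i;j)})$ of $G_\muhat$ that fixes $\varphihat$ must satisfy $g_{(i;0)}\,\varphi_{(j) \to (i;0)} = \varphi_{(j) \to (i;0)}$ for all $i, j$. By the spanning condition (ii) of Proposition~\ref{span}, the vectors $\varphi_{(j) \to (i;0)}$ span $V^{\varphihat}_{(i;0)}$, so $g_{(i;0)} = \mathrm{Id}$. Climbing each long leg and using the injectivity condition (i) of Proposition~\ref{span}, an induction then yields $g_{(i;j)} = \mathrm{Id}$ at every node, so $g = 1$.

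Next, the $T$-equivariance of the bijection gives a short exact sequence
\begin{equation*}
1 \longrightarrow C_{G_\muhat}(\varphihat) \longrightarrow C_{G_{\v_\muhat}}(\varphihat) \longrightarrow C_T(\w\u) \longrightarrow 1,
\end{equation*}
where the right-hand map is $(g, t) \mapsto t$. It lands in $C_T(\w\u)$ because the image of $(g,t)\cdot\varphihat$ in $\coprod \w U_\w$ is $t \cdot \w\u$. For surjectivity, given $t \in C_T(\w\u)$, the representations $\varphihat$ and $t \cdot \varphihat$ have the same image $\w\u$, hence lie in the same $G_\muhat$-orbit; choosing $g \in G_\muhat$ with $g \cdot \varphihat = t \cdot \varphihat$ and using commutativity of the $G_\muhat$- and $T$-actions gives $(g^{-1}, t) \in C_{G_{\v_\muhat}}(\varphihat)$. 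Combined with the first step, this yields an isomorphism $C_{G_{\v_\muhat}}(\varphihat) \simeq C_T(\w\u)$ which carries $Z_{\v_\muhat}$ onto $Z_r$.

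The equivalence (ii) $\Leftrightarrow$ (iii) is now immediate: $C_{G_{\v_\muhat}}(\varphihat) = Z_{\v_\muhat}$ iff $C_T(\w\u) = Z_r$, which by (the straightforward $k$-tuple version of) Proposition~\ref{connected} is equivalent to the connectedness of $K_{\w,\u}$. For (i) $\Leftrightarrow$ (ii), Proposition~\ref{kacu} characterizes (i) as $C_{G_{\v_\muhat}}(\varphihat)/Z_{\v_\muhat}$ being unipotent; by the isomorphism above, this quotient is isomorphic to the torus $C_T(\w\u)/Z_r$, which is unipotent precisely when trivial, i.e., when (ii) holds. The main delicate point is the first step: both defining conditions of $\X_\muhat$ must be invoked, in the right order, to kill the entire $G_\muhat$-stabilizer.
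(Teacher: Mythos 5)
Your proof is correct, and it reorganizes the argument into a cleaner, more structural form than the paper's. The paper proves the two implications (i)~$\Rightarrow$~(iii) and (iii)~$\Rightarrow$~(ii) somewhat ad hoc, in each case passing through echelon normal forms and appealing to the uniqueness statement (Proposition~\ref{echelon}) to kill the $G_\muhat$-part of the stabilizer; it never isolates the clean fact, as you do, that $C_{G_\muhat}(\varphihat)$ is already trivial for every $\varphihat\in\X_\muhat$. Your first step (triviality of the $G_\muhat$-stabilizer from the spanning and injectivity conditions of Proposition~\ref{span}) and your second step (the exact sequence identifying $C_{G_{\v_\muhat}}(\varphihat)$ with $C_T(\w\u)$, using $T$-equivariance of the bijection~\eqref{eq:bijection} and the fact that the fibers of~\eqref{eq:bijection} are single $G_\muhat$-orbits) together package what the paper does in one pass. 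What you gain is that (ii)~$\Leftrightarrow$~(iii) and (i)~$\Leftrightarrow$~(ii) then fall out formally, whereas the paper has to argue separately (its (i)~$\Rightarrow$~(iii) direction relies on the implicit observation that an element of $T/Z_r$ that is both semisimple and unipotent is trivial, which your torus-isomorphism approach makes transparent). In particular you have also made explicit the point, only implicit in the paper, that $C_T(\u)$ is a subtorus (an intersection of kernels of characters $\alpha_{i,j}$), so that the unipotence criterion of Proposition~\ref{kacu} reduces to triviality.
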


By Proposition \ref{span}, the absolutely indecomposable
representations of $(\Gamma,\v_\mu)$ over $\kappa$ are all in
$\X_\mu$.

\begin{proof}[Proof of Proposition \ref{ab}] First assume $\varphihat$
is absolutely indecomposable.  Then $C_{G_{\v_\muhat}}(\varphihat)/Z_{\v_\muhat}$
 is unipotent, see Proposition \ref{kacu}.  Therefore $C_T(\u)$ must reduce to $Z_r$.  Indeed if $t\in C_T(\u)$, then there exists $g\in G_\muhat$ such that $(g,t)\in C_{G_{\v_\muhat}}(\varphihat)$ and so we must have $t\in Z_r$ for $(g,t)$ to be unipotent modulo $Z_{\v_\muhat}$. By Proposition \ref{connected},
the graph $K_{\w,\u}$ is connected.

Now assume that the graph $K_{\w,\u}$ is connected.  By Proposition \ref{kacu} the
representation $\varphihat$ is absolutely indecomposable if and only
if the group $C_{G_\muhat\times T}(\varphihat)/Z_{\v_\muhat}$ is unipotent. Taking a
conjugate of $\varphihat$ if necessary we may assume that
 the image  $(F_{\varphihat},M_{\varphihat})$ under (\ref{bij})  is such  that the stabilizer of
$$
F_{\varphihat}^i=(E^{s_i}_i\subset\cdots\subset E^1_i\subset E^0_i=\kappa^{n_i})
$$
in $\GL_{n_i}$ is the parabolic subgroup $P_{\mu^i}$ and
$M_{\varphihat}^i$ is in its row echelon form with respect to
$(\mu^i_{s_i+1},\mu^i_{s_i},\dots,\mu^i_1)$. Let $(g,t)\in
G_\muhat\times T$ be such that
\begin{equation}(g,t)\cdot\varphihat=\varphihat.
\label{sta}\end{equation} 
Then $g=(g^{(i;t)})_{i,t}\in
G_\muhat$ must
satisfy $g^{(i;0)}\in P_{\mu^i}$ and $g^{(i;t)}=g^{(i;0)}|_{E^t_i}$ for all $i=1,\dots,k$ and $t=1,\dots,s_i$. Taking the image of
$(g,t)\cdot\varphihat=\varphihat$ by (\ref{eq:bijection}) we find that $t\cdot
(\w\u)=\w\u$.  Therefore $t\in C_T(\u)$.

Since (by assumption) $K_{\w,\u}$ is connected, Proposition
\ref{connected} implies $C_T(\u)=Z_r$. Thus (\ref{sta}) reduces to
$$
(\lambda^{-1}\cdot  g^{(i;0)})\cdot M_{\varphihat}^i=M_{\varphihat}^i
$$
for all $i=1,\dots,k$, with $t=\lambda\cdot {\rm I}_r\in Z_r$ for some $\lambda\in
\kappa$. By Proposition \ref{echelon}, we find that $g^{(i;0)}=\lambda\cdot
{\rm I}_n$, i.e., $(g,t)\in Z_{\v_\muhat}$. Hence $C_{G_\mu\times
T}(\varphihat)=Z_{\v_\muhat}$ and therefore $\varphihat$ is absolutely
indecomposable.  This completes the proof.
\end{proof}

For $\w\in S_{\tilde{\muhat}}$ we put 
\begin{equation}\label{eq:rw}
R_\w(q):=\sum_{K\subset K_\w}(q-1)^{b_1(K)},
\end{equation}
where the sum is over the connected subgraphs of $K_\w$; here
$b_1(K)=e(K)-r+1$ is the first Betti number and $e(K)$ is the number
of edges of $K$. If the graph $K_\w$ is not connected then we put
$R_\w(q)=0$.

Denote by $\X_\muhat^\w \subset \X_{\muhat}$ the subset of representations
corresponding to 
$\w U_\w$ in the bijection \eqref{eq:bijection}.

\begin{theorem} The polynomial $R_\w(q)$ counts the number of
isomorphism classes of absolutely indecomposable representations in
$\X_\muhat^\w(\F_q)$.
\label{maintheo2}\end{theorem}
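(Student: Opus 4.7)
The plan is to transfer the counting of isomorphism classes to a counting of $T$-orbits on $\w U_\w$ via Proposition \ref{prop1}, and then to evaluate the count by stratifying over subgraphs of $K_\w$. First, since $G_{\v_\muhat} \simeq G_\muhat \times T$, two elements of $\X_\muhat^\w(\F_q)$ are isomorphic as representations of $\Gamma$ exactly when they lie in the same $(G_\muhat \times T)$-orbit, so the $T$-equivariant bijection of Proposition \ref{prop1} puts the isomorphism classes in $\X_\muhat^\w(\F_q)$ in bijection with the $T$-orbits on $(\w U_\w)(\F_q)$. Combining Propositions \ref{ab} and \ref{connected}, the absolutely indecomposable classes correspond precisely to those $T$-orbits $T\cdot\u$ with $K_{\w,\u}$ connected, equivalently with $C_T(\u) = Z_r$. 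Every such orbit is a torsor under $T/Z_r$, hence of uniform cardinality $|T|/|Z_r| = (q-1)^{r-1}$.

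Next, I would count $N := |\{\u \in U_\w(\F_q) : K_{\w,\u} \text{ is connected}\}|$ by stratifying according to the subgraph $K = K_{\w,\u} \subseteq K_\w$. Here I use that $U_\w$ is a product of one-dimensional root subgroups $U_{i,j}$ indexed by the edges of $K_\w$, so specifying $\u$ amounts to choosing one scalar in $\F_q$ per edge, and an edge belongs to $K_{\w,\u}$ iff the corresponding scalar is non-zero. (The $T$-action by conjugation scales each root-subgroup coordinate by a character, hence preserves vanishing and so does not affect the subgraph $K_{\w,\u}$, confirming that the stratification is $T$-stable.) Therefore, for each subgraph $K \subseteq K_\w$, the number of $\u$ with $K_{\w,\u} = K$ is $(q-1)^{e(K)}$, and summing over connected spanning subgraphs yields $N = \sum_{K \text{ connected}} (q-1)^{e(K)}$.

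Finally, dividing by the uniform orbit size gives the number of absolutely indecomposable isomorphism classes in $\X_\muhat^\w(\F_q)$ as
\[
\frac{N}{(q-1)^{r-1}} = \sum_{\substack{K \subseteq K_\w \\ K \text{ connected}}} (q-1)^{e(K)-r+1} = \sum_{\substack{K \subseteq K_\w \\ K \text{ connected}}} (q-1)^{b_1(K)} = R_\w(q),
\]
using that $b_1(K) = e(K) - r + 1$ for a connected graph on the $r$ vertices of $K_\w$. The conceptual pivot is the identification of absolutely indecomposable classes with connected-graph data, which chains together Propositions \ref{prop1}, \ref{ab}, and \ref{connected}; after that, the proof is pure bookkeeping of independent edge-scalars, and no serious obstacle is anticipated. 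The only point requiring a little care is the uniformity of the orbit size: this rests precisely on the equivalence ``$K_{\w,\u}$ connected'' $\Leftrightarrow$ ``$C_T(\u) = Z_r$'' furnished by Proposition \ref{connected}.
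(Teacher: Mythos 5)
Your proof is correct and follows essentially the same route as the paper's: transfer isomorphism classes to $T$-orbits on $\w U_\w$ via Proposition \ref{prop1}, identify the absolutely indecomposable ones with the locus where $K_{\w,\u}$ is connected via Propositions \ref{ab} and \ref{connected}, stratify by the subgraph $K=K_{\w,\u}$ with each stratum of size $(q-1)^{e(K)}$, and divide by the uniform orbit size $(q-1)^{r-1}$ coming from the free $T/Z_r$-action. In fact you state the last step more carefully than the published text, which says $T/Z_r$ acts \emph{trivially} on $\calC$ where it clearly means \emph{freely} (Proposition \ref{connected} gives $C_T(\u)=Z_r$, whence torsor orbits, exactly as you argue); your observation that the $T$-action preserves the subgraph stratification, being implicit in the paper, is also a welcome clarification.
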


\begin{proof} The $T$-equivariant bijection \eqref{eq:bijection}
induces an isomorphism between the isomorphism classes of $\X_\muhat^\w$
with the $T$-orbits of $\w U_\w$. By Proposition \ref{ab} the isomorphism
classes of absolutely indecomposable representations in $\X_\muhat^\w$
corresponds to the $T$-orbits of $\calC=\{\w\u\in \w U_\w\,|\, K_{\w,\u}
\text{ is connected}\,\}$. Now for a given subgraph $K$ of $K_\w$, the
number of elements $\u\in U_\w(\F_q)$ such that $K=K_{\w,\u}$ equals
$(q-1)^{e(K)}$. Moroever by Proposition \ref{connected}, the group
$T/Z_r$ acts trivially on $\calC$ and so the number of $T$-orbits of
$\calC$ over $\F_q$ equals $R_\w(q)$.
\end{proof}

We can now state the main result of our paper:
\begin{theorem}\label{thm:main}
We have 
$$
A_{\Gamma,\v_\muhat}(q)=\sum_{\w\in S_{\tilde{\muhat}}}R_\w(q).
$$
\end{theorem}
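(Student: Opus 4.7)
The plan is to assemble this theorem directly from the preparatory results; it is essentially a bookkeeping statement tying together Propositions \ref{span}, \ref{prop1}, and \ref{ab} with Theorem \ref{maintheo2}. My strategy is to count $A_{\Gamma,\v_\muhat}(q)$ via its definition as the number of isomorphism classes of absolutely indecomposable representations of $(\Gamma,\v_\muhat)$ over $\F_q$, then successively reduce this count along the chain of reductions already built.

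First I would observe that by Proposition \ref{span}, every indecomposable representation $\varphihat\in{\rm Rep}_{\Gamma,\v_\muhat}(\F_q)$ must in fact lie in $\X_\muhat(\F_q)$, since the injectivity on the long legs and the spanning property at the nodes $(i;0)$ are forced by indecomposability. In particular, every absolutely indecomposable representation lies in $\X_\muhat(\F_q)$, and hence
$$A_{\Gamma,\v_\muhat}(q)=\#\bigl\{\text{iso.\ classes of abs.\ indec.\ reps in }\X_\muhat(\F_q)\bigr\}.$$

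Next, I would invoke Proposition \ref{prop1}, which provides the $T$-equivariant bijection
$$\X_\muhat/G_\muhat\;\overset{\sim}{\longrightarrow}\;\coprod_{\w\in S_{\tilde{\muhat}}}\w U_\w.$$
Since $G_{\v_\muhat}\simeq G_\muhat\times T$, this decomposes $\X_\muhat$ into $G_{\v_\muhat}$-stable strata $\X_\muhat^\w$ indexed by $\w\in S_{\tilde{\muhat}}$, and isomorphism classes within $\X_\muhat^\w(\F_q)$ correspond to $T$-orbits on $\w U_\w(\F_q)$. Because the decomposition is disjoint, the total count splits as
$$A_{\Gamma,\v_\muhat}(q)=\sum_{\w\in S_{\tilde{\muhat}}}\#\bigl\{\text{iso.\ classes of abs.\ indec.\ reps in }\X_\muhat^\w(\F_q)\bigr\}.$$

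Finally, Theorem \ref{maintheo2} identifies each of these stratum counts with $R_\w(q)$, which yields
$$A_{\Gamma,\v_\muhat}(q)=\sum_{\w\in S_{\tilde{\muhat}}}R_\w(q)$$
as desired. There is no serious obstacle remaining at this point: the conceptual work — characterizing absolute indecomposability via connectedness of $K_{\w,\u}$ (Proposition \ref{ab}), identifying the resulting $T$-orbit count with the connected subgraph generating polynomial, and verifying that the $\w$-stratification is compatible with absolute indecomposability — has all been handled upstream. The only thing to check is that no isomorphism class is counted twice or missed, which follows from the disjointness of the strata in Proposition \ref{prop1} combined with Proposition \ref{span} ensuring the relevant representations all sit inside $\X_\muhat$.
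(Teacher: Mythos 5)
Your proof is correct and follows precisely the route the paper intends: the paper states Theorem \ref{thm:main} without a written proof, treating it as the immediate consequence of the remark after Proposition \ref{ab} (that all absolutely indecomposable representations lie in $\X_\muhat$), the disjoint $T$-equivariant stratification of Proposition \ref{prop1}, and the stratum-by-stratum count of Theorem \ref{maintheo2}. You have simply made that implicit assembly explicit, so there is nothing to add or correct.
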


\subsection{Tutte polynomial of graphs.}
\label{rem:tutte}
The above polynomials $R_\w (q)$ are related to classical graph
polynomials.  Recall (cf.~\cite{sokal, godsil}) that the Tutte
polynomial $T_{K} (x,y)\in \Z[x,y]$ for a graph $K$ with edge set $E$
and vertex set $V$ can be defined by
\[
T_{K} (x,y) = \sum_{A\subseteq E} (x-1)^{k (A)-k (E)} (y-1)^{k (A)+|A|-|V|},
\]
where $k (A)$ is the number of connected components of the subgraph
with edge set $A$. Tutte proved that for a connected graph $K$ we also
have
\[
T_K(x,y)=\sum_Tx^{i(T)}y^{e(T)},
\]
where the sum is over all spanning trees $T$ of $K$ and $i(T),e(T)$
are respectively their {\it internal} and {\it external activity} (for
some fixed but arbitrary ordering of the edges of $K$). In particular,
the coefficients of $T(x,y)$ are non-negative integers.

 In this paper we will only be concerned with the specialization (for
 $K$ a connected graph)
\[
R_K(q):=T_K(1,q)=\sum_Tq^{e(T)},
\]
which we will call the {\it external activity polynomial} of $K$.  Up
to a variable change and renormalization, $R_K(q)$ coincides with the
reliability polynomial
\[
(1-p)^{|V|-k(K)}p^{|E|-|V|+k(K)} T_{K}(1,1/p),
\]
which computes the probability that a connected graph $K$ remains
connected when each edge is independently deleted with fixed
probability $p$.

A result of Hausel and Sturmfels~\cite{hausel-sturmfels} implies that
the Kac polynomial of a quiver with dimension vector consisting of all
$1$'s equals the external activity polynomial of the underlying graph.

 It is clear that if $K=K_{\w}$ is connected then
\[
R_{\w} (q) = R_{K_\w}.
\]
Hence Theorem \ref{thm:main} together with Tutte's result provide an
alternative proof of the non-negativity of the coefficients of the Kac
polynomials $A_{\Gamma,\v_\muhat}(q)$ (see Theorem \ref{HLV}).

\subsection{Counting $T$-orbits on flag varieties}

Let $P_1,\dots,P_k$ be parabolic subgroups of $\GL_r$ containing the
lower triangular matrices (this is only for convenience). Recall that
$T$ denotes the maximal torus of $\GL_r$ of diagonal matrices. To each
parabolic $P_i$ corresponds a unique partition
$\tilde{\mu}{^i}=(\tilde{\mu}{^i_1},\tilde{\mu}{^i_2},\dots)$ given by the size of the blocks. Denote
by $E^T_{\tilde{\muhat}}(q)$ the number over $\F_q$ of $T$-orbits in
$\prod_{i=1}^k\GL_r/P_i$ whose stabilizers equal $Z_r$.  For $i=1,\dots,k$,
put $n_i:=r-\tilde{\mu}{^i_1}$, and denote by $\mu^i$ the partition
$(\tilde{\mu}{^i_2},\tilde{\mu}{^i_3},\dots)$ of $n_i$. From the tuple
$\muhat=(\mu^1,\dots,\mu^k)$ and $r$ we consider the associated quiver
$\Gamma$ equipped with dimension vector $\v_\muhat$ as in \S
\ref{dandelion}.

In view of Remark \ref{rem:multiflags}, we deduce from Proposition
\ref{ab} the following result, which relates Kac polynomials of
complete bipartite supernova quivers to counting $T$-orbits:

\begin{theorem} We have 
$$
E^T_{\tilde{\muhat}}(q)=A_{\Gamma,\v_\muhat}(q).
$$
In particular, $E^T_{\tilde{\muhat}}(q)$ is non zero if and only if
$\v_\muhat\in\Phi(\Gamma)$. Moreover $E^T_{\tilde{\muhat}}(q)=1$ if
and only if $\v_\muhat$ is a real root.
\label{maintheo1}\end{theorem}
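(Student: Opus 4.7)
The plan is to chain together the $T$-equivariant bijections and stabilizer computations already set up in the paper, and then invoke Theorem \ref{kac2} for the final two assertions about non-vanishing and reality.

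First I would identify the space being counted. By Remark \ref{rem:multiflags}, the right hand side of \eqref{eq:bijection} is $T$-equivariantly isomorphic to $\prod_{i=1}^k \GL_r/P_{\tilde{\mu}^i}$, and the parabolics $P_1,\dots,P_k$ of the theorem statement are precisely the $P_{\tilde{\mu}^i}$ by construction of $\muhat$ from $\tilde{\muhat}$. So $E^T_{\tilde{\muhat}}(q)$ is the number over $\F_q$ of $T$-orbits on $\coprod_{\w\in S_{\tilde{\muhat}}} \w U_\w$ whose stabilizer in $T$ equals $Z_r$.

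Next I would transfer this count to the quiver side. Combining with Proposition \ref{prop1}, there is a $T$-equivariant bijection between $\X_\muhat/G_\muhat$ and $\coprod_\w \w U_\w$. Since $G_{\v_\muhat}\simeq G_\muhat\times T$ and $Z_{\v_\muhat}$ projects onto $Z_r$ under the $T$-factor, the $T$-orbits on $\coprod_\w \w U_\w$ with $T$-stabilizer $Z_r$ are in bijection with the $G_{\v_\muhat}$-orbits on $\X_\muhat$ with stabilizer $Z_{\v_\muhat}$. Proposition \ref{ab} identifies the latter with the set of isomorphism classes of absolutely indecomposable representations lying in $\X_\muhat$. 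By Proposition \ref{span}, every absolutely indecomposable representation of $(\Gamma,\v_\muhat)$ is in $\X_\muhat$, so this is the full set of isomorphism classes of absolutely indecomposable representations of $(\Gamma,\v_\muhat)$ over $\F_q$, which by definition is $A_{\Gamma,\v_\muhat}(q)$.

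For the two remaining assertions, I would simply invoke Theorem \ref{kac2}(ii): $A_{\Gamma,\v_\muhat}(t)$ is non-zero iff $\v_\muhat\in \Phi(\Gamma)$, and it equals $1$ iff $\v_\muhat$ is a real root. No additional obstacle should arise here; essentially all the work has already been done in Propositions \ref{span}, \ref{prop1}, and \ref{ab}. The only mild subtlety to be careful about is the passage between $\muhat$ and $\tilde{\muhat}$ (the $0$-th part $\mu_0^i = r-n_i$ appearing in the construction of $\tilde{\mu}^i$), and the matching of the flag variety $\GL_r/P_{\tilde{\mu}^i}$ with $\coprod_{w\in S_{\tilde{\mu}^i}} wU_w$ via the generalized Bruhat decomposition of \S\ref{bruhat}; this identification is precisely the content of Remark \ref{rem:multiflags}, so no real calculation is required.
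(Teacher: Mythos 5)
Your argument is correct and mirrors the paper's intended proof, which is stated in one line before Theorem \ref{maintheo1}: combine Remark \ref{rem:multiflags} (identifying the product of flag varieties with $\coprod_{\w}\w U_\w$ via the generalized Bruhat decomposition) with Propositions \ref{prop1}, \ref{ab}, and \ref{span}, then finish with Theorem \ref{kac2}(ii). The one phrasing to tighten: the step ``$T$-stabilizer $= Z_r$ iff $G_{\v_\muhat}$-stabilizer $= Z_{\v_\muhat}$'' does not follow formally from $Z_{\v_\muhat}$ projecting onto $Z_r$; it is precisely the equivalence (ii)$\Leftrightarrow$(iii) of Proposition \ref{ab} together with Proposition \ref{connected}, which you do invoke in the very next sentence, so the substance is there.
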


\begin{remark}\label{rem:speyer}
According to Theorem \ref{maintheo1}, Theorem \ref{thm:main} and \S \ref{rem:tutte} we can count certain $T$-orbits
on homogeneous varieties over $\F_{q}$ in terms of specializations of
Tutte polynomials of certain graphs.  Work of Fink and Speyer
\cite{fs, speyer} provides a geometric interpretation of the Tutte
polynomial of realizable matroids and the $T$-equivariant $K$-theory
of torus orbits.  It would be interesting to understand the
relationship between our work and theirs.
\end{remark}

\section{Examples}\label{sec:examples}
\subsection{Notation}
In this section we present examples to illustrate Theorems
\ref{thm:main} and \ref{maintheo1}.  We first consider the special case when
$k$, the number of long legs of the supernova, equals $1$.  We call
such quivers \emph{dandelion quivers} (cf.~Figure
\ref{fig:dandelion}).  In these examples the tuple of permutations
$\w$ consists of a single element $w$, so we lighten notation and
write $K_{w}$ for $K_{\w}$, etc.  We represent permutations $w \in
S_{r}$ by giving the sequence of their values, using square brackets
to avoid conflict with cycle notation.  Thus $[3,2,4,1] \in S_{4}$
means the permutation taking $1\mapsto 3, 2\mapsto 2, 3\mapsto 4,
4\mapsto 1$.  When possible we omit brackets and commas and write
e.g.~$3241$ for $[3,2,4,1]$.

\newcommand{\sss}{\scriptstyle}
\begin{figure}[htb]
\begin{center}
\psfrag{vd}{$\vdots$}
\psfrag{hd}{$\dots$}
\psfrag{0}{$\sss (1;0)$}
\psfrag{21}{$\sss (1)$}
\psfrag{31}{$\sss (2)$}
\psfrag{41}{$\sss (3)$}
\psfrag{rm11}{$\sss (r-2)$}
\psfrag{r1}{$\sss (r-1)$}
\psfrag{rp11}{$\sss (r)$}
\psfrag{11}{$\sss (1;1)$}
\psfrag{1dm2}{$\sss (1;s_{1}-1)$}
\psfrag{1dm1}{$\sss (1,s_{1})$}
\includegraphics[scale=0.25]{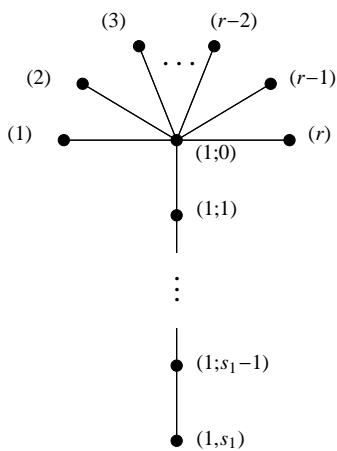}
\end{center}
\caption{The dandelion quiver\label{fig:dandelion}}
\end{figure}

\subsection{Projective space}
Consider the dandelion quiver with no long leg, and with central node
labelled with $n$.  In this example we consider the two cases $r=n$
and $r=n+1$.  It is not hard to see that the corresponding root is
real.  Indeed, apply a reflection at the central node.  If $r=n$ we
get all leaf nodes labelled with $1$ and with the central node
labelled with $0$.  If $r=n+1$, the central node is labelled with $1$.
We can further apply reflections along the leaves to make every leaf
have label $0$.  Thus in these cases the root is real and we should
have $A=1$.

If $r=n$, then the homogeneous variety is that of $n$-planes in
$\kappa^{n}$, i.e.~is a single point.  There is one inversion graph,
which is itself a point, and Theorem \ref{thm:main} implies that 
the Kac polynomial equals 1.

On the other hand, if $r=n+1$, then our homogeneous variety is that of
$n$-planes in $\kappa^{n+1}$, i.e.~is a projective space.  This time
the only connected inversion graph corresponds to the permutation
$w=[n+1,1,2,\dotsc ,n]$, which indexes the open Schubert cell.  The
graph $K_{w}$ is a tree, and again the Kac polynomial equals 1.

\subsection{A grassmannian}\label{ss:grassmannian}
Now we consider a more complicated example.  Let $\Gamma$ be the
quiver in Figure \ref{fig:example1}, with the indicated dimension
vector $\v_{\mu }$.  One can check using Lemma \ref{lem:twofive} that
this vector gives an imaginary root.  The homogeneous variety is $Gr
(2,5)$, the grassmannian of $2$-planes in $\kappa^5$.  This variety is
$6$-dimensional and can be paved by $10$ Schubert cells $U_{w} =
P_{s}wB$, where $w$ ranges over the minimal length elements in the
$10$ cosets of $S_{2}\times S_{3}$ in $S_{5}$.  Hence there are 10
graphs $K_{w}$ of order $5$ that we need for $A_{\Gamma ,\v_\mu} (q)$.
Of these graphs, only $4$ are connected.  In fact, the number of edges
of $K_{w}$ equals the dimension of the Schubert cell $U_{w}$, and
since we must have at least four edges for a graph of order $5$ to be
connected, only the cells of dimensions $\geq 4$ need to be
considered.  These are labelled by the permutations $31452$, $34125$,
$34152$, and $34512$.

Figures \ref{fig:firsttwo}--\ref{fig:secondtwo} show these four
graphs.  We consider each in turn:
\begin{itemize}
\item The graph $K_{34125}$ is not
connected, so $R_{34125}=0$.
\item The graph $K_{31452}$ is a
connected tree, which implies $R_{31452}= 1$.  
\item The graph $K_{34152}$ is a $4$-cycle with an extra edge.  There
are $4$ spanning trees contributing $1$ each, and the full graph
contributes $q-1$.  Thus $R_{34152}=q+3$.
\item The last graph $K_{34512}$ is a complete bipartite graph of type
$(2,3)$.  There are $12$ spanning trees; each contributes $1$ to
$R_{34512}$.  Deleting any single edge yields a graph isomorphic
to $K_{34152}$, each of which contributes $q-1$.  Finally, the full
graph itself has betti number $2$ and thus contributes $(q-1)^{2}$.
Altogether we find $R_{34512}= q^2 + 4q + 7$.
\end{itemize}
Thus
\begin{equation}\label{eq:example1}
A_{\Gamma , \v_\mu} (q) = R_{31452}+ R_{34152}+ R_{34512}= q^{2}+5q+11.
\end{equation}


\begin{figure}[htb]
\begin{center}
\begin{tikzpicture}[scale=1.25]
\coordinate (0) at (0,0);
\coordinate (1) at (0:1);
\coordinate (2) at (45:1);
\coordinate (3) at (90:1);
\coordinate (4) at (135:1);
\coordinate (5) at (180:1);
\draw[thick] (0) -- (1);
\draw[thick] (0) -- (2);
\draw[thick] (0) -- (3);
\draw[thick] (0) -- (4);
\draw[thick] (0) -- (5);
\node at (0,-0.35)  {$2$};
\node at (0:1.35) {$1$};
\node at (45:1.35) {$1$};
\node at (90:1.35) {$1$};
\node at (135:1.35) {$1$};
\node at (180:1.35) {$1$};
\fill[black] (0) circle (0.07);
\fill[black] (1) circle (0.07);
\fill[black] (2) circle (0.07);
\fill[black] (3) circle (0.07);
\fill[black] (4) circle (0.07);
\fill[black] (5) circle (0.07);
\end{tikzpicture}
\end{center}
\caption{\label{fig:example1}}
\end{figure}

\begin{figure}[htb]
\centering
\subfigure[$w=31452$\label{fig:31452}]{
\begin{tikzpicture}[scale=0.75]
\clip (-2.01,-2.01) rectangle (2.01,2.01);
\coordinate (1) at (0:1);
\coordinate (2) at (72:1);
\coordinate (3) at (144:1);
\coordinate (4) at (-144:1);
\coordinate (5) at (-72:1);
\draw[thick] (1) -- (2);
\draw[thick] (1) -- (5);
\draw[thick] (3) -- (5);
\draw[thick] (4) -- (5);
\node at (0:1.35) {$1$};
\node at (72:1.35) {$2$};
\node at (144:1.35) {$3$};
\node at (-144:1.35) {$4$};
\node at (-72:1.35) {$5$};
\fill[black] (1) circle (0.1);
\fill[black] (2) circle (0.1);
\fill[black] (3) circle (0.1);
\fill[black] (4) circle (0.1);
\fill[black] (5) circle (0.1);
\end{tikzpicture}
}
\quad\quad 
\subfigure[$w=34125$\label{fig:34125}]{
\begin{tikzpicture}[scale=0.75]
\clip (-2.01,-2.01) rectangle (2.01,2.01);
\coordinate (1) at (0:1);
\coordinate (2) at (72:1);
\coordinate (3) at (144:1);
\coordinate (4) at (-144:1);
\coordinate (5) at (-72:1);
\draw[thick] (1) -- (3);
\draw[thick] (1) -- (4);
\draw[thick] (2) -- (3);
\draw[thick] (2) -- (4);
\node at (0:1.35) {$1$};
\node at (72:1.35) {$2$};
\node at (144:1.35) {$3$};
\node at (-144:1.35) {$4$};
\node at (-72:1.35) {$5$};
\fill[black] (1) circle (0.1);
\fill[black] (2) circle (0.1);
\fill[black] (3) circle (0.1);
\fill[black] (4) circle (0.1);
\fill[black] (5) circle (0.1);
\end{tikzpicture}
}
\medskip
\caption{\label{fig:firsttwo}}
\end{figure}

\begin{figure}[htb]
\centering
\subfigure[$w=34152$\label{fig:34152}]{
\begin{tikzpicture}[scale=0.75]
\clip (-2.01,-2.01) rectangle (2.01,2.01);
\coordinate (1) at (0:1);
\coordinate (2) at (72:1);
\coordinate (3) at (144:1);
\coordinate (4) at (-144:1);
\coordinate (5) at (-72:1);
\draw[thick] (1) -- (3);
\draw[thick] (1) -- (5);
\draw[thick] (2) -- (3);
\draw[thick] (2) -- (5);
\draw[thick] (4) -- (5);
\node at (0:1.35) {$1$};
\node at (72:1.35) {$2$};
\node at (144:1.35) {$3$};
\node at (-144:1.35) {$4$};
\node at (-72:1.35) {$5$};
\fill[black] (1) circle (0.1);
\fill[black] (2) circle (0.1);
\fill[black] (3) circle (0.1);
\fill[black] (4) circle (0.1);
\fill[black] (5) circle (0.1);
\end{tikzpicture}
}
\quad\quad 
\subfigure[$w=34512$\label{fig:34512}]{
\begin{tikzpicture}[scale=0.75]
\clip (-2.01,-2.01) rectangle (2.01,2.01);
\coordinate (1) at (0:1);
\coordinate (2) at (72:1);
\coordinate (3) at (144:1);
\coordinate (4) at (-144:1);
\coordinate (5) at (-72:1);
\draw[thick] (1) -- (4);
\draw[thick] (1) -- (5);
\draw[thick] (2) -- (4);
\draw[thick] (2) -- (5);
\draw[thick] (3) -- (4);
\draw[thick] (3) -- (5);
\node at (0:1.35) {$1$};
\node at (72:1.35) {$2$};
\node at (144:1.35) {$3$};
\node at (-144:1.35) {$4$};
\node at (-72:1.35) {$5$};
\fill[black] (1) circle (0.1);
\fill[black] (2) circle (0.1);
\fill[black] (3) circle (0.1);
\fill[black] (4) circle (0.1);
\fill[black] (5) circle (0.1);
\end{tikzpicture}
}
\medskip
\caption{\label{fig:secondtwo}}
\end{figure}

\subsection{A two-step flag variety}
\label{two-step}
Now consider the dandelion quiver in Figure \ref{fig:example2}, with
the indicated dimension vector.  This is of course the same example we
just treated, except that now we regard one of the short legs as being
the long leg.  The corresponding homogeneous variety is no longer a a
grassmannian; instead we have the partial flag variety of two-step
flags $E^{3}\subset E^{2}$ in $\kappa^{4}$.  This time the inversion
graphs have $4$ vertices, so we need at least $3$ edges in any $K_{w}$
for it be connected, and there are 6 permutations with at least three
inversions.  The graphs are show in
Figures~\ref{fig:g124one}--\ref{fig:g124three}.  We leave it to the
reader to check the following:
\begin{itemize}
\item $R_{3142}=1$
\item $R_{3214}=0$
\item $R_{3412}=q+3$
\item $R_{2341}=1$
\item $R_{3241}=q+2$
\item $R_{3421}=q^{2}+3q+4$
\end{itemize}
Thus
\begin{equation}\label{eq:example2}
A_{\Gamma , \v_\mu} (q) = q^{2}+5q+11,
\end{equation}
which agrees with \eqref{eq:example1}. 


\begin{figure}[htb]
\begin{center}
\begin{tikzpicture}[scale=1.25]
\coordinate (0) at (0,0);
\coordinate (1) at (0:1);
\coordinate (2) at (60:1);
\coordinate (3) at (120:1);
\coordinate (4) at (180:1);
\coordinate (5) at (-90:1);
\draw[thick] (0) -- (1);
\draw[thick] (0) -- (2);
\draw[thick] (0) -- (3);
\draw[thick] (0) -- (4);
\draw[thick] (0) -- (5);
\node at (-0.35,-0.35)  {$2$};
\node at (0:1.35) {$1$};
\node at (60:1.35) {$1$};
\node at (120:1.35) {$1$};
\node at (180:1.35) {$1$};
\node at (-90:1.35) {$1$};

\fill[black] (0) circle (0.07);
\fill[black] (1) circle (0.07);
\fill[black] (2) circle (0.07);
\fill[black] (3) circle (0.07);
\fill[black] (4) circle (0.07);
\fill[black] (5) circle (0.07);
\end{tikzpicture}
\end{center}
\caption{\label{fig:example2}}
\end{figure}

\begin{figure}[htb]
\centering
\subfigure[$w=3142$\label{fig:3142}]{
\begin{tikzpicture}[scale=0.75]
\clip (-2.01,-2.01) rectangle (2.01,2.01);
\coordinate (1) at (0:1);
\coordinate (2) at (90:1);
\coordinate (3) at (180:1);
\coordinate (4) at (270:1);
\draw[thick] (1) -- (2);
\draw[thick] (1) -- (4);
\draw[thick] (3) -- (4);
\node at (0:1.35) {$1$};
\node at (90:1.35) {$2$};
\node at (180:1.35) {$3$};
\node at (270:1.35) {$4$};
\fill[black] (1) circle (0.1);
\fill[black] (2) circle (0.1);
\fill[black] (3) circle (0.1);
\fill[black] (4) circle (0.1);
\end{tikzpicture}
}
\quad\quad 
\subfigure[$w=3214$\label{fig:3214}]{
\begin{tikzpicture}[scale=0.75]
\clip (-2.01,-2.01) rectangle (2.01,2.01);
\coordinate (1) at (0:1);
\coordinate (2) at (90:1);
\coordinate (3) at (180:1);
\coordinate (4) at (270:1);
\draw[thick] (1) -- (2);
\draw[thick] (1) -- (3);
\draw[thick] (2) -- (3);
\node at (0:1.35) {$1$};
\node at (90:1.35) {$2$};
\node at (180:1.35) {$3$};
\node at (270:1.35) {$4$};
\fill[black] (1) circle (0.1);
\fill[black] (2) circle (0.1);
\fill[black] (3) circle (0.1);
\fill[black] (4) circle (0.1);
\end{tikzpicture}
}
\medskip
\caption{\label{fig:g124one}}
\end{figure}

\begin{figure}[htb]
\centering
\subfigure[$w=3412$\label{fig:3412}]{
\begin{tikzpicture}[scale=0.75]
\clip (-2.01,-2.01) rectangle (2.01,2.01);
\coordinate (1) at (0:1);
\coordinate (2) at (90:1);
\coordinate (3) at (180:1);
\coordinate (4) at (270:1);
\draw[thick] (1) -- (3);
\draw[thick] (1) -- (4);
\draw[thick] (2) -- (3);
\draw[thick] (2) -- (4);
\node at (0:1.35) {$1$};
\node at (90:1.35) {$2$};
\node at (180:1.35) {$3$};
\node at (270:1.35) {$4$};
\fill[black] (1) circle (0.1);
\fill[black] (2) circle (0.1);
\fill[black] (3) circle (0.1);
\fill[black] (4) circle (0.1);
\end{tikzpicture}
}
\quad\quad 
\subfigure[$w=2341$\label{fig:2341}]{
\begin{tikzpicture}[scale=0.75]
\clip (-2.01,-2.01) rectangle (2.01,2.01);
\coordinate (1) at (0:1);
\coordinate (2) at (90:1);
\coordinate (3) at (180:1);
\coordinate (4) at (270:1);
\draw[thick] (1) -- (4);
\draw[thick] (3) -- (4);
\draw[thick] (2) -- (4);
\node at (0:1.35) {$1$};
\node at (90:1.35) {$2$};
\node at (180:1.35) {$3$};
\node at (270:1.35) {$4$};
\fill[black] (1) circle (0.1);
\fill[black] (2) circle (0.1);
\fill[black] (3) circle (0.1);
\fill[black] (4) circle (0.1);
\end{tikzpicture}
}
\medskip
\caption{\label{fig:g124two}}
\end{figure}

\begin{figure}[htb]
\centering
\subfigure[$w=3241$\label{fig:3241}]{
\begin{tikzpicture}[scale=0.75]
\clip (-2.01,-2.01) rectangle (2.01,2.01);
\coordinate (1) at (0:1);
\coordinate (2) at (90:1);
\coordinate (3) at (180:1);
\coordinate (4) at (270:1);
\draw[thick] (1) -- (2);
\draw[thick] (1) -- (4);
\draw[thick] (2) -- (4);
\draw[thick] (3) -- (4);
\node at (0:1.35) {$1$};
\node at (90:1.35) {$2$};
\node at (180:1.35) {$3$};
\node at (270:1.35) {$4$};
\fill[black] (1) circle (0.1);
\fill[black] (2) circle (0.1);
\fill[black] (3) circle (0.1);
\fill[black] (4) circle (0.1);
\end{tikzpicture}
}
\quad\quad 
\subfigure[$w=3421$\label{fig:3421}]{
\begin{tikzpicture}[scale=0.75]
\clip (-2.01,-2.01) rectangle (2.01,2.01);
\coordinate (1) at (0:1);
\coordinate (2) at (90:1);
\coordinate (3) at (180:1);
\coordinate (4) at (270:1);
\draw[thick] (1) -- (3);
\draw[thick] (1) -- (4);
\draw[thick] (2) -- (3);
\draw[thick] (2) -- (4);
\draw[thick] (3) -- (4);
\node at (0:1.35) {$1$};
\node at (90:1.35) {$2$};
\node at (180:1.35) {$3$};
\node at (270:1.35) {$4$};
\fill[black] (1) circle (0.1);
\fill[black] (2) circle (0.1);
\fill[black] (3) circle (0.1);
\fill[black] (4) circle (0.1);
\end{tikzpicture}
}
\medskip
\caption{\label{fig:g124three}}
\end{figure}

\subsection{A product of projective planes}\label{ss:p2squared}
Now we consider a more general supernova quiver.  We take
$r=3$ and $(n_{1},n_{2})= (1,1)$.  Thus the quiver is the complete
bipartite graph of type $(3,2)$, and the dimension vector assigns $1$
to each vertex.  In terms of $T$-orbits, we are counting the orbits of
dimension $2$ on a product of two projective planes with a
$2$-dimensional torus acting diagonally.

The inversion graphs are labelled by pairs of permutations
$(w_{1},w_{2})\in (S_{3})^{2}$.  There are five connected inversion
graphs; they are characterized by having at least one $w_{i}$ equal to
$312$, the longest permutation for this Bruhat decomposition.  We show
the graphs in Figures \ref{fig:firsttwomulti}--\ref{fig:312312} (edges
curving in correspond to the first permutation, and those curving out
to the second). We find
\begin{itemize}
\item $R_{123,312} = R_{312,123} = 1$
\item $R_{132,312} = R_{312,132} = q+2$
\item $R_{312,312} = q^{2}+2q+1$
\end{itemize}
Altogether we obtain
\begin{equation}\label{eq:pair}
A_{\Gamma , \v_{\muhat}} = q^{2}+4q+7.
\end{equation}
We remark that \eqref{eq:pair} is in fact the external activity
polynomial of the underlying graph of the quiver thanks to the result
of Hausel and Sturmfels (see~\S\ref{rem:tutte}).  Indeed, the Tutte
polynomial of the complete bipartite graph of type $(3,2)$ is
\[
x^4 + 2x^3 + 3x^2 + x + y^{2} + 4y.
\]

We can also recover \eqref{eq:pair} by counting $2$-dimensional torus
orbits in $\calF = \bP^{2}\times \bP^{2}$, following Theorem
\ref{maintheo1}.  Let $\pi \colon \calF \rightarrow \bP^{2}$ be the
projection onto the first factor.  The action of the torus $T$
commutes with $\pi$.
\begin{itemize}
    \item Choose a point $p_{0}$ in the image of $\pi$ with trivial
    stabilizer. Any point in the inverse image of $p_{0}$ determines a
    unique $2$-dimensional orbit, and thus this accounts for
    $q^{2}+q+1$ orbits.
    \item Now choose a point $p_{0}$ in the image of $\pi$ with
    $1$-dimensional stabilizer.  We claim the inverse image of $p_{0}$
    determines $q+1$ orbits.  Indeed, after we have fixed $p_{0}$,
    have one dimension of $T$ left.  This can move points along the
    lines in $T$-fixed point not contained in the closure of the orbit
    of $p_{0}$.  There are $q+1$ such lines, and hence $q+1$ orbits.
    Since there are $3$ choices for $p_{0}$ (corresponding to the
    three $1$-dimensional $T$ orbits in $\bP^{2}$ we obtain $3q+3$
    orbits altogether.
    \item Finally we can choose a point $p_{0}$ fixed by $T$.  There
    is one $2$-dimensional $T$-orbit in the inverse image of $p_{0}$.
    Since there are $3$ choices of $p_{0}$ we get $3$ orbits this way.
\end{itemize}
Hence altogether we find $q^{2}+4q+7$ torus orbits of dimension $2$,
which coincides with \eqref{eq:pair}.

\begin{figure}[htb]
\centering
\subfigure[$(123,312)$\label{fig:123312}]{
\begin{tikzpicture}[scale=0.75]
\clip (-2.01,-2.01) rectangle (2.01,2.01);
\coordinate (1) at (0:1);
\coordinate (2) at (120:1);
\coordinate (3) at (240:1);
\coordinate (c) at (60:0.9);
\coordinate (a) at (180:0.9);
\coordinate (b) at (300:0.9);
\coordinate (C) at (60:0.25);
\coordinate (A) at (180:0.25);
\coordinate (B) at (300:0.25);

\draw[thick] (1) .. controls (c) .. (2);
\draw[thick] (1) .. controls (b) .. (3);
\node at (0:1.35) {$1$};
\node at (120:1.35) {$2$};
\node at (240:1.35) {$3$};
\fill[black] (1) circle (0.1);
\fill[black] (2) circle (0.1);
\fill[black] (3) circle (0.1);
\end{tikzpicture}
}
\quad\quad 
\subfigure[$(312,123)$\label{fig:312123}]{
\begin{tikzpicture}[scale=0.75]
\clip (-2.01,-2.01) rectangle (2.01,2.01);
\coordinate (1) at (0:1);
\coordinate (2) at (120:1);
\coordinate (3) at (240:1);
\coordinate (c) at (60:0.9);
\coordinate (a) at (180:0.9);
\coordinate (b) at (300:0.9);
\coordinate (C) at (60:0.25);
\coordinate (A) at (180:0.25);
\coordinate (B) at (300:0.25);

\draw[thick] (1) .. controls (C) .. (2);
\draw[thick] (1) .. controls (B) .. (3);
\node at (0:1.35) {$1$};
\node at (120:1.35) {$2$};
\node at (240:1.35) {$3$};
\fill[black] (1) circle (0.1);
\fill[black] (2) circle (0.1);
\fill[black] (3) circle (0.1);
\end{tikzpicture}
}
\medskip
\caption{\label{fig:firsttwomulti}}
\end{figure}

\begin{figure}[htb]
\centering
\subfigure[$(132,312)$\label{fig:132312}]{
\begin{tikzpicture}[scale=0.75]
\clip (-2.01,-2.01) rectangle (2.01,2.01);
\coordinate (1) at (0:1);
\coordinate (2) at (120:1);
\coordinate (3) at (240:1);
\coordinate (c) at (60:0.9);
\coordinate (a) at (180:0.9);
\coordinate (b) at (300:0.9);
\coordinate (C) at (60:0.25);
\coordinate (A) at (180:0.25);
\coordinate (B) at (300:0.25);

\draw[thick] (1) .. controls (c) .. (2);
\draw[thick] (1) .. controls (b) .. (3);
\draw[thick] (2) .. controls (A) .. (3);
\node at (0:1.35) {$1$};
\node at (120:1.35) {$2$};
\node at (240:1.35) {$3$};
\fill[black] (1) circle (0.1);
\fill[black] (2) circle (0.1);
\fill[black] (3) circle (0.1);
\end{tikzpicture}
}
\quad\quad 
\subfigure[$(312,132)$\label{fig:312132}]{
\begin{tikzpicture}[scale=0.75]
\clip (-2.01,-2.01) rectangle (2.01,2.01);
\coordinate (1) at (0:1);
\coordinate (2) at (120:1);
\coordinate (3) at (240:1);
\coordinate (c) at (60:0.9);
\coordinate (a) at (180:0.9);
\coordinate (b) at (300:0.9);
\coordinate (C) at (60:0.25);
\coordinate (A) at (180:0.25);
\coordinate (B) at (300:0.25);

\draw[thick] (1) .. controls (C) .. (2);
\draw[thick] (1) .. controls (B) .. (3);
\draw[thick] (2) .. controls (a) .. (3);
\node at (0:1.35) {$1$};
\node at (120:1.35) {$2$};
\node at (240:1.35) {$3$};
\fill[black] (1) circle (0.1);
\fill[black] (2) circle (0.1);
\fill[black] (3) circle (0.1);
\end{tikzpicture}
}
\medskip
\caption{\label{fig:firsttwo}}
\end{figure}

\begin{figure}[h]
\begin{center}
\begin{tikzpicture}[scale=0.75]
\clip (-2.01,-2.01) rectangle (2.01,2.01);
\coordinate (1) at (0:1);
\coordinate (2) at (120:1);
\coordinate (3) at (240:1);
\coordinate (c) at (60:0.9);
\coordinate (a) at (180:0.9);
\coordinate (b) at (300:0.9);
\coordinate (C) at (60:0.25);
\coordinate (A) at (180:0.25);
\coordinate (B) at (300:0.25);

\draw[thick] (1) .. controls (C) .. (2);
\draw[thick] (1) .. controls (B) .. (3);
\draw[thick] (1) .. controls (c) .. (2);
\draw[thick] (1) .. controls (b) .. (3);

\node at (0:1.35) {$1$};
\node at (120:1.35) {$2$};
\node at (240:1.35) {$3$};
\fill[black] (1) circle (0.1);
\fill[black] (2) circle (0.1);
\fill[black] (3) circle (0.1);
\end{tikzpicture}
\end{center}
\caption{$(312,312)$\label{fig:312312}}
\end{figure}

\subsection{Counting $T$-orbits}\label{ss:torbits}

We conclude by illustrating Theorem \ref{maintheo1} for the grassmannian $Gr
(2,5)$ from section \ref{ss:grassmannian}.  The main tool we use is
the \emph{Gel$'$fand--MacPherson correspondence}, which we state in
Theorem~\ref{thm:gm}.  We refer to \cite[]{kapranov, gm,ggms,gs} for
more details.

Let $E\subset \C^{r}$ be a subspace of dimension $k$.  Assume that $E$
does not lie in any of the coordinate hyperplanes $H_{i} = \{z_{i}=0
\}\subset \C^{r}$.  The intersections $E\cap H_{i}$ determine a
collection of $r$ hyperplanes in $E$ and thus a point in
$(\bP^{k-1})^{r}$, i.e.~a projective configuration.  (Here we think
of $\bP^{k-1}$ as being $\bP (E^{*})$).  If $E'$ is a
$T$-translate of $E$, then the configuration corresponding to $E'$ is
equivalent to $E$ an element of $\PGL_k$ acting diagonally on
$(\bP^{k-1})^{r}$.   

Hence we can study $T$-orbits on $G (k,r)$ in
terms of certain configurations of $r$ points in $\bP^{k-1}$.  The
precise statement of this fact is the Gel$'$fand--MacPherson
correspondence.  We will only need to understand what happens when the 
the $T$-orbits have maximal dimension $r-1$.

\begin{theorem}\label{thm:gm}
Let $G_{\circ} (k,r)\subset G (k,r)$ be the subset of all $L$ such
that $T\cdot L$ has dimension $r-1$.  Let $(\bP^{k-1})^{r}_{\circ}$ be
the subset of configurations $p= (p_{1},\dotsc ,p_{r})$ such that
$\PGL_{k}\cdot p$ has dimension $k^{2}-1$.  Then the assigment
$L\mapsto p$, where $p_{i}=E\cap H_{i}$, defines a bijection of orbit
spaces
\[
\Phi \colon G_{\circ} (k,r)/T \longrightarrow (\bP^{k-1})^{r}_{\circ}/\PGL_{k}.
\]
\end{theorem}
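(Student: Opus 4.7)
The plan is to realize both orbit spaces as two descriptions of a single double quotient, in the spirit of the classical Gel$'$fand--MacPherson correspondence. Let $M^{*}$ denote the space of $k\times r$ matrices over $\C$ of full rank $k$ whose columns are all nonzero, and equip $M^{*}$ with the commuting actions of $\GL_{k}$ (by left multiplication) and of $T=(\C^{\times})^{r}$ (by right scaling of columns). I would proceed by quotienting by each factor in turn and then matching stabilizers.

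First, I would observe that sending $A\in M^{*}$ to its row space $L\subset\C^{r}$ induces a $T$-equivariant bijection between $\GL_{k}\backslash M^{*}$ and the set $G(k,r)^{*}$ of those $L\in G(k,r)$ that are not contained in any coordinate hyperplane $H_{i}$. Similarly, sending $A$ to the sequence of its columns viewed as points in $\bP^{k-1}=\bP(\C^{k})$ induces a $\GL_{k}$-equivariant bijection between $M^{*}/T$ and the subset of $(\bP^{k-1})^{r}$ consisting of spanning configurations: the no-zero-column condition on $M^{*}$ ensures that the columns define points of $\bP^{k-1}$, and the full-rank condition on $A$ matches the spanning condition on the configuration. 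Since the scalar subtorus of $\GL_{k}$ acts trivially on $(\bP^{k-1})^{r}$, the $\GL_{k}$-action factors through $\PGL_{k}$. Quotienting by both commuting groups therefore yields a canonical bijection between $G(k,r)^{*}/T$ and the set of $\PGL_{k}$-orbits on spanning configurations in $(\bP^{k-1})^{r}$.

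The remaining step, and the main technical obstacle, is to match the maximal-dimension conditions defining $G_{\circ}(k,r)$ and $(\bP^{k-1})^{r}_{\circ}$. For $A\in M^{*}$ with columns $c_{1},\dots,c_{r}$, the equation $gA=At$ forces $g\in\GL_{k}$ to satisfy $gc_{i}=t_{i}c_{i}$ for all $i$; thus $g$ preserves each line $\C\cdot c_{i}$, the element $t$ is determined by $g$ (since all $c_{i}$ are nonzero), and $g$ is determined by $t$ (since the rows of $A$ are independent). The two projections from $C_{\GL_{k}\times T}(A)$ are therefore both isomorphisms, yielding
\[
C_{T}(L) \;\cong\; C_{\GL_{k}\times T}(A) \;\cong\; \widetilde{C}_{\GL_{k}}(p),
\]
where $\widetilde{C}_{\GL_{k}}(p)$ denotes the preimage of $C_{\PGL_{k}}(p)$ under $\GL_{k}\to\PGL_{k}$. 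Since the diagonal $\C^{\times}\subset\GL_{k}$ always lies in $\widetilde{C}_{\GL_{k}}(p)$, we obtain $\dim C_{T}(L)=\dim C_{\PGL_{k}}(p)+1$, so $\dim T\cdot L=r-1$ if and only if $\dim C_{\PGL_{k}}(p)=0$, that is, if and only if $\dim\PGL_{k}\cdot p=k^{2}-1$.

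Finally, I would check that $G_{\circ}(k,r)\subset G(k,r)^{*}$ automatically, which ensures $\Phi$ is defined on all of $G_{\circ}(k,r)$: if $L\subset H_{i}$ then the coordinate subtorus $\{t\in T\mid t_{j}=1,\ j\neq i\}$ fixes $L$ in addition to the scalars, forcing $\dim C_{T}(L)\geq 2$ and hence $\dim T\cdot L\leq r-2$. Combined with the previous step this shows that $\Phi$ restricts to a bijection of the distinguished subsets, as claimed. The stabilizer identification in the display is the only nontrivial point; everything else is bookkeeping for the two quotient presentations of the space $M^{*}$.
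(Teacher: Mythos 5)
The paper states Theorem~\ref{thm:gm} without proof, referring the reader to the Gel$'$fand--MacPherson literature \cite{kapranov, gm, ggms, gs}, so there is no internal argument to compare against. Your proposal is the standard double-quotient proof: realize both orbit spaces via $\GL_k\backslash M^*/T$ for $M^*$ the full-rank $k\times r$ matrices with no zero column, and observe that the stabilizer $C_{\GL_k\times T}(A)$ projects isomorphically onto $C_T(L)$ and onto the preimage $\widetilde C_{\GL_k}(p)$ of $C_{\PGL_k}(p)$, giving $\dim C_T(L)=\dim C_{\PGL_k}(p)+1$, which is exactly what is needed to cut out $G_\circ(k,r)$ and $(\bP^{k-1})^r_\circ$ compatibly. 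This is correct and is the argument the cited sources have in mind.

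One omission worth fixing: your matrix model only produces configurations whose points span, so to conclude that $\Phi$ surjects onto $(\bP^{k-1})^r_\circ/\PGL_k$ you also need that every configuration with $\dim\PGL_k\cdot p=k^2-1$ is automatically spanning. This holds by the dual of your own observation that $G_\circ(k,r)\subset G(k,r)^*$: if all $p_i$ lie in a proper projective subspace $\bP(V)\subsetneq\bP^{k-1}$, then the unipotent radical of the parabolic stabilizing $V$ acts trivially on each $p_i$ and contributes a positive-dimensional stabilizer, so the orbit is not of maximal dimension. You spell out the Grassmannian-side inclusion but not this configuration-side one. Finally, your $p_i$ live in $\bP(\C^k)$ (columns of $A$), whereas the paper's $p_i=E\cap H_i$ live naturally in $\bP(E^*)$; the two agree after the evident dualization (equivalently, after the automorphism $g\mapsto (g^{-1})^T$ of $\PGL_k$), so this is a matter of convention and does not affect the bijection of orbit spaces.
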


\begin{remark}
The bijection $\Phi$ can be extended to all of $G (k,r)$
\cite[Proposition 1.5]{ggms}.
\end{remark}

In general it is very difficult to determine the configurations in the
image of $\Phi$, but there is one case that is easy: the grassmannians
$G (2,r)$.  When $k=2$ the configurations are sets of points in the
projective line, and the only degenerations that can occur are
multiple points.  To make this precise, let us say that a collection
of distinct points $p_{1},\dotsc ,p_{m}$ is $r$-labelled if it is
equipped with a surjective map $\{1,\dotsc ,r \}\rightarrow
\{p_{1},\dotsc ,p_{m} \}$.  We have the following characterization of
the $T$-orbits (cf.~\cite[Section~1.3]{kapranov}).

\begin{proposition}
Torus orbits in $G (2,r)$ of maximal dimension are in bijection with
$r$-labelled sets of $m$ points in $\bP^{1}$ up to
$\PGL_{2}$-equivalence, where $3\leq m\leq r$.
\end{proposition}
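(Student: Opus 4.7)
The plan is to apply the Gel$'$fand--MacPherson correspondence (Theorem \ref{thm:gm}) in the case $k=2$ and then translate the condition ``$\PGL_{k}\cdot p$ has maximal dimension'' into a concrete condition on the configuration. By Theorem \ref{thm:gm}, maximal-dimensional $T$-orbits in $G(2,r)$ are in bijection with $\PGL_{2}$-orbits of configurations $p=(p_{1},\dots,p_{r})\in (\bP^{1})^{r}$ whose $\PGL_{2}$-orbit has maximal dimension $3$. So the task reduces to describing these configurations modulo $\PGL_{2}$.

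First I would analyze when the $\PGL_{2}$-orbit of $p=(p_{1},\dots,p_{r})$ has dimension $3$, i.e.~when the stabilizer of $p$ in $\PGL_{2}$ is zero-dimensional. The crucial input is that $\PGL_{2}$ acts sharply $3$-transitively on $\bP^{1}$: any element of $\PGL_{2}$ fixing three distinct points is the identity, whereas the subgroup fixing two (resp.\ one, resp.\ zero) points has dimension $1$ (resp.\ $2$, resp.\ $3$). Consequently, the stabilizer of $p$ is trivial precisely when the underlying set $\{p_{1},\dots,p_{r}\}\subset\bP^{1}$ has cardinality $m\geq 3$; otherwise the stabilizer is positive-dimensional and the orbit does not reach dimension $3$. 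Hence $(\bP^{1})^{r}_{\circ}$ consists of those $p$ whose underlying set has cardinality between $3$ and $r$.

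Next I would set up the bijection of the statement. To a configuration $p=(p_{1},\dots,p_{r})\in (\bP^{1})^{r}_{\circ}$, associate the pair consisting of its image $\{q_{1},\dots,q_{m}\}\subset\bP^{1}$ (with $m=|\{p_{1},\dots,p_{r}\}|$, so $3\leq m\leq r$) together with the tautological surjection $\{1,\dots,r\}\twoheadrightarrow\{q_{1},\dots,q_{m}\}$ sending $i\mapsto p_{i}$; this is by definition an $r$-labelled set of $m$ points. This recipe is visibly $\PGL_{2}$-equivariant. Conversely, an $r$-labelled set $(\{q_{1},\dots,q_{m}\},\phi)$ recovers the configuration by $p_{i}=\phi(i)$, giving a two-sided inverse at the level of orbits.

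The main thing to verify is the matching of equivalences: two configurations $p,p'$ are $\PGL_{2}$-equivalent if and only if the corresponding $r$-labelled point sets are $\PGL_{2}$-equivalent (i.e.\ there is $g\in\PGL_{2}$ sending one underlying set to the other compatibly with the surjections from $\{1,\dots,r\}$). This is straightforward from the definitions, because labelling commutes with the $\PGL_{2}$-action. The only conceptual step is the dimension-of-stabilizer computation above, which is the substantive content; once that is in place, combining it with Theorem \ref{thm:gm} yields the proposition. No genuine obstacle arises in $G(2,r)$ because the only possible degenerations of points in $\bP^{1}$ are coincidences of points, which are exactly captured by the cardinality $m$ of the underlying set.
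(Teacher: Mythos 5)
Your argument is correct and is essentially the same as what the paper has in mind: the paper states this proposition without proof (citing Kapranov), but the preceding discussion makes clear that the intended content is exactly the Gel$'$fand--MacPherson correspondence for $k=2$ combined with the observation that for points in $\bP^1$ the only degeneration is coincidence of points, i.e., the $\PGL_2$-stabilizer is trivial precisely when at least three of the $p_i$ are distinct. Your write-up correctly isolates sharp $3$-transitivity of $\PGL_2$ as the substantive step and matches the paper's reasoning.
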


Now we consider configurations over $\F_{q}$.  Let $C_{m} (q)$ be the
number of configurations of $m$ distinct points up to equivalence.
Fix three points in $\bP^{1} (\F _{q})$ and call them $0$, $1$, and
$\infty$.  Given $m$ unlabelled points in $\bP^{1}$, we can use
$\PGL_2$ to carry three of them to $0,1,\infty$.  This uses up all the
automorphisms, which gives the following:
\[
C_{m} (q) = \begin{cases}
(q-2) (q-3) (q- (m-2))&\text{if $m>3$,}\\
1&\text{if $m=3$.}
\end{cases}
\]
To complete the count we need to incorporate the labellings.  An
$r$-labelling is determined by a sujective map $\{1,\dotsc ,r
\}\rightarrow \{p_{1},\dotsc ,p_{m} \}$, in other words an equivalence
relation on $\{1,\dotsc ,r \}$ with $m$ classes.  These are counted by
$S (r,m)$, the Stirling number of the second kind.  Letting $E^{T}_{r}
(q)$ denote the number of $T$-orbits, we have
\[
E^{T}_{r} (q) = \sum_{m=3}^r S(r,m) C_{m} (q). 
\]
For instance, when $r=5$, we have 
\[
E^{T}_{5} (q) = 1\cdot (q-2) (q-3)+10\cdot (q-2)+25 = q^{2}+5q+11,
\]
in agreement with \eqref{eq:example1}.

Comparing Figures \ref{fig:example1} and \ref{fig:example2}, one sees
that over $\F_{q}$ the number of $(r-1)$-dimensional torus orbits in
$Gr (2,r)$ equals the number of $(r-2)$-dimensional torus orbits in
the flag variety of $\{\text{point} \subset \text{line}\}$ in
$\bP^{r-2}$ (the tori have different dimensions, of course).  This
suggests that there should be a bijection between the sets of torus
orbits for these two homogeneous varieties.  This is true, and we
leave the reader the pleasure of finding it.


\section{Generating Functions}

We will use the series~\cite[(1.4)]{hausel-letellier-villegas} to obtain a generating
function for the Kac polynomials of the supernova quivers
of~\S\ref{dandelion}.  The series~\cite[(1.4)]{hausel-letellier-villegas} in the case where
the quiver is the complete $(k,r)$ bipartite graph with $k+r$ vertices
is the following
\begin{equation}
 \label{maingen}
 \bH(\bfX,\bfY;q):=(q-1)\Log\left(
   \sum_{\lambda^i,\mu^j} 
   \frac{q^{\sum_{i,j}{\langle\lambda^i,\mu^j\rangle}}
  \prod_i  \tilde{H}_{\lambda^i}(\x_i;q) 
   \prod_j  \tilde{H}_{\mu^j}(\y_j;q) }
{\prod_i
     q^{\langle\lambda^i,\lambda^i\rangle}b_{\lambda^i}(q^{-1})
\prod_j 
     q^{\langle\mu^j,\mu^j\rangle}b_{\mu^j}(q^{-1})}
 \right), 
\end{equation}
where $i=1,\ldots,k,j=1,\ldots,r$
$$
b_\lambda(q):=    \prod_{i\geq 1}\prod_{j=1}^{m_i(\lambda)}(1-q^j), 
$$
with $m_i(\lambda)$ the multiplicity of $i$ in $\lambda$ and 
$\bfX= (\x_1,\dots,\x_r);\bfY=(\y_1,\ldots,\y_k)$.

Since we are interested in a dimension vector where the $r$ vertices
have value $1$ we can restrict the $\y$ variables to
$\y_i=(u_i,0,\dots)$ for some independent variables $u_1,\ldots,
u_r$. Furthermore, we only need to work modulo the ideal $I:=\langle
u_1^2,\ldots,u_r^2\rangle$.

We have $\tilde{H}_\lambda(u,0,\ldots)=u^{|\lambda|}$. It follows that
the right hand side of~\eqref{maingen} becomes
$$
(q-1)\Log\left(
   \sum_{\lambda^i} 
\sum_{s=0}^r
   \frac{q^{s\sum_il(\lambda^i)}
e_s(u)
  \prod_i  \tilde{H}_{\lambda^i}(\x_i;q)}
{(q-1)^s\prod_i
     q^{\langle\lambda^i,\lambda^i\rangle}b_{\lambda^i}(q^{-1})
 }
 \right) \bmod I, 
$$
where $e_s(u)=e_s(u_1,\ldots,u_r)$ is the elementary symmetric function in
the $u_i$'s. Interchanging summations this equals
$$
(q-1)\Log\left(
\sum_{s=0}^r
\prod_i
c_s(\x_i)
\frac{e_s(u)}
{(q-1)^s}
 \right) \bmod I, 
$$
where
$$
c_s(\x):=   \sum_\lambda\frac{q^{sl(\lambda)}
   \tilde{H}_{\lambda}(\x;q)}
{q^{\langle\lambda,\lambda\rangle}b_{\lambda}(q^{-1})}, \qquad 
\x=(x_1,x_2,\ldots).
$$
Note that
$$
e_{s_1}(u)\cdots e_{s_l}(u)\equiv 
\frac{(s_1+\cdots s_l)!}{s_1!\cdots s_l!}e_{s_1+\cdots
  +s_l}(u) \bmod I\,.
$$
Therefore we may replace $e_s(u)$ by a single term $U^s/s!$ and  let
$r$ be arbitrary. Except for the constant term in $U$ the
values of $\Log$ and $\log$ agree since we are working modulo
$I$. Hence we get
$$
(q-1)\Log\left(\prod_i
c_0(\x_i)\right)+ (q-1)\log\left(
\sum_{s\geq 0}
\prod_i
\frac{c_s(\x_i)}{c_0(\x_i)}
\frac{(U/(q-1))^s}
{s!}
 \right) 
$$
Define the \emph{Rogers-Sz\"ego symmetric functions} as
$$
R_s(\x):=\sum_{|\lambda|=s}
{s\brack \lambda_1,\lambda_2,\cdots}
\ m_\lambda(\x),
\qquad  
\x:=(x_1,x_2,\ldots),
$$
where $m_\lambda$ is the monomial symmetric function and
$$
{s\brack \lambda_1,\lambda_2,\cdots}:=
\frac{[s]!}{[\lambda_1]![\lambda_2]!\cdots}, 
\qquad [n]!:=(1-q)\cdots(1-q^n),
$$
is the $q$-multinomial and $q$-factorial respectively.
\begin{proposition}
The following identity holds
$$
\frac{c_s(\x;q)}{c_0(\x;q)}=R_s(1,x_1,x_2,\ldots), \qquad 
\x:=(x_1,x_2,\ldots).
$$
\end{proposition}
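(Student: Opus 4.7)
My plan is to prove the identity by matching generating functions. Denote, as usual, $(y;q)_\infty:=\prod_{k\geq 0}(1-yq^k)$ and $(y;q)_n:=\prod_{k=0}^{n-1}(1-yq^k)$, so that $[n]!=(q;q)_n$.

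First I would establish the generating function for the Rogers--Szeg\"o symmetric functions. Unpacking the $q$-multinomial ${s\brack \lambda_1,\lambda_2,\ldots}=[s]!/([\lambda_1]!\,[\lambda_2]!\cdots)$ and using the $q$-exponential identity $\sum_{n\geq 0}y^n/[n]!=1/(y;q)_\infty$, the sum over $\lambda$ collapses to a sum over weak compositions which factors as a product of $q$-exponentials, giving
$$\sum_{s\geq 0}R_s(\x)\frac{z^s}{[s]!}=\prod_{i\geq 1}\frac{1}{(x_iz;q)_\infty}.$$
Substituting the first variable by $1$ yields
$$\sum_{s\geq 0}R_s(1,\x)\frac{z^s}{[s]!}=\frac{1}{(z;q)_\infty}\prod_{i\geq 1}\frac{1}{(x_iz;q)_\infty}.$$

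For the $c_s$-side, I would interchange the order of summation and use $\sum_{s\geq 0}(q^{l(\lambda)}z)^s/[s]!=1/(q^{l(\lambda)}z;q)_\infty=(z;q)_{l(\lambda)}/(z;q)_\infty$ to obtain
$$\sum_{s\geq 0}c_s(\x;q)\frac{z^s}{[s]!}=\frac{1}{(z;q)_\infty}\sum_\lambda\frac{\tilde{H}_\lambda(\x;q)\,(z;q)_{l(\lambda)}}{q^{\langle\lambda,\lambda\rangle}b_\lambda(q^{-1})}.$$
Comparing the two generating functions, the proposition is equivalent to
$$\sum_\lambda\frac{\tilde{H}_\lambda(\x;q)\,(z;q)_{l(\lambda)}}{q^{\langle\lambda,\lambda\rangle}b_\lambda(q^{-1})}=\frac{c_0(\x;q)}{\prod_{i\geq 1}(x_iz;q)_\infty}.$$

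This last identity is a specialization of a Cauchy-type identity for the modified Macdonald polynomials $\tilde{H}_\lambda(\x;q)$ underlying the series of \cite{hausel-letellier-villegas}. The point is to recognize $(z;q)_{l(\lambda)}$ as the value of $\tilde{H}_\lambda(\y;q)$ at a principal/plethystic specialization of $\y$ that depends on $\lambda$ only through its number of parts, after which the kernel $\prod_i(x_iz;q)_\infty^{-1}$ arises as the corresponding evaluation of the two-variable Cauchy kernel. The main obstacle is precisely identifying this plethystic specialization and matching the normalization involving the $q^{\langle\lambda,\lambda\rangle}$ and $b_\lambda(q^{-1})$ factors; although the underlying identity for Hall--Littlewood polynomials is classical, the bookkeeping is delicate.
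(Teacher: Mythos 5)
Your generating-function manipulations are correct and in fact spell out explicitly what the paper leaves tacit. The paper's own proof is the single line ``It follows from the main formula proved in~\cite{hausel},'' so both you and the authors ultimately rest the proposition on a Cauchy-type identity for the transformed Hall--Littlewood polynomials $\tilde H_\lambda$. Your computation of $\sum_{s\ge 0}R_s(\x)z^s/[s]!=\prod_i(x_iz;q)_\infty^{-1}$ via the $q$-exponential is right (the sum over partitions and the multinomial collapse correctly to a product over weak compositions), and on the $c_s$-side the interchange of sums together with $1/(q^{l(\lambda)}z;q)_\infty=(z;q)_{l(\lambda)}/(z;q)_\infty$ is valid, so you have correctly isolated
$$
\sum_\lambda\frac{\tilde H_\lambda(\x;q)\,(z;q)_{l(\lambda)}}{q^{\langle\lambda,\lambda\rangle}b_\lambda(q^{-1})}=c_0(\x;q)\prod_{i\geq 1}\frac{1}{(x_iz;q)_\infty}
$$
as the precise content of the proposition (and the $z=0$ check is a useful sanity test).

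That said, your proof stops exactly where the real content begins, and you say so yourself. You have not stated the identity from \cite{hausel} nor verified that your displayed identity is a specialization of it; the claim that $(z;q)_{l(\lambda)}$ arises as a suitable one-row (plethystic) evaluation of $\tilde H_\lambda(\y;q)$, and that the kernel $\prod_i(x_iz;q)_\infty^{-1}$ then pops out with the correct $q^{\langle\lambda,\lambda\rangle}b_\lambda(q^{-1})$ normalization, is asserted as plausible but is not carried out. That matching is precisely what the citation to Hausel's main formula is meant to settle, and until you pin down the specialization and check the normalization, your argument only establishes that the proposition is \emph{equivalent to} an identity you have not proved. So the approach is correct and consistent with the paper's, but the proof as written has a genuine (and self-acknowledged) gap at the last step.
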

Let $\calA_s(\x_1,\ldots,\x_k;q)$ be defined by the generating
function
\begin{equation}
\label{A-defn}
\sum_{s\geq 1} \calA_s(\x_1,\ldots,\x_k;q) \;\frac{U^s}{s!}  =(q-1)\log
\sum_s R_s(\x_1)\cdots R_s(\x_k) \frac{(U/(q-1))^s}{s!}.
\end{equation}
\begin{proof}
It follows from the main formula proved in~\cite{hausel}.
\end{proof}
A priori $\calA_s(\x_1,\ldots,\x_k;q)$ are symmetric functions with
coefficients in $\Q(q)$. In fact, the coefficients are in $\Z[q]$ as
we now see. Combining the above discussion
with~\cite{hausel-letellier-villegas}[Prop. (1.3) (i)] we finally
obtain the following.
\begin{theorem}
  With the notation of~\S\ref{Kac-polyns} the Kac polynomial
  $A_{\Gamma,\v_\muhat}$ of the complete bipartite supernova quiver is
  given by
\begin{equation}
\label{gf-fmla}
A_{\Gamma,\v_\muhat}(q)=\langle \calA_r,h_{\tilde\muhat}\rangle,
\end{equation}
where $h_\mu$ denotes the complete symmetric function,
$h_{\tilde\muhat}:=h_{\tilde\mu^1}\cdots h_{\tilde\mu^k}$ with 
$\tilde\muhat=(\tilde\mu^1,\ldots,\tilde\mu^k)$ and $\tilde\mu^i$ is
the partition of $r$ defined by $(r-|\mu^i|,\mu_1^i,\mu_2^i,\ldots)$.
\end{theorem}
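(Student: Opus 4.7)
The plan is to derive the theorem as a direct consequence of the Hausel--Letellier--Villegas generating function \cite[(1.4)]{hausel-letellier-villegas}, specialized to the complete bipartite quiver, by assembling the reductions already carried out in this section. The proof is essentially a bookkeeping exercise: everything needed is in place, and the task is to identify the final coefficient extraction with the pairing $\langle\calA_r, h_{\tilde\muhat}\rangle$.

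First, I would recall the HLV series $\bH(\bfX,\bfY;q)$ attached to the complete $(k,r)$-bipartite graph, and invoke \cite[Proposition 1.3(i)]{hausel-letellier-villegas}, which asserts that $A_{\Gamma,\v_\muhat}(q)$ is recovered as the coefficient of an appropriate product of complete symmetric functions in $\bH$ when $\v_\muhat$ is the target dimension vector. On the $\x$-side, the partitions $\tilde\mu^i = (r-|\mu^i|,\mu^i_1,\mu^i_2,\dots)$ of $r$ encode simultaneously the value $|\mu^i|$ at the hub $(i;0)$ and the decreasing labels along the $i$-th long leg, so the relevant pairing on this side is $\langle\,\cdot\,,h_{\tilde\muhat}\rangle$. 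On the $\y$-side, the $r$ "leaf" vertices all carry dimension $1$, so one may set $\y_i=(u_i,0,0,\dots)$ and work modulo the ideal $I=\langle u_1^2,\dots,u_r^2\rangle$, since any monomial divisible by $u_i^2$ corresponds to a dimension $\geq 2$ at the $i$-th leaf and is therefore irrelevant.

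Next, I would apply the specialization $\tilde H_\lambda(u,0,\dots)=u^{|\lambda|}$ and collect by the common length $s=\sum_i\ell(\lambda^i)$, as in the preceding subsection, to rewrite the argument of $\Log$ as $\sum_{s=0}^{r} \bigl(\prod_i c_s(\x_i)\bigr)\, e_s(u)/(q-1)^s$. The congruence
\[
e_{s_1}(u)\cdots e_{s_\ell}(u)\equiv \frac{(s_1+\cdots+s_\ell)!}{s_1!\cdots s_\ell!}\, e_{s_1+\cdots+s_\ell}(u)\pmod I
\]
lets one replace $e_s(u)$ by the single formal monomial $U^s/s!$ (with the convention that $r$ is recovered afterwards as the degree in $U$). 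Because of this replacement, all non-constant terms of $\Log$ agree with those of $\log$ modulo $I$, so after factoring out the trivial $(q-1)\Log\prod_i c_0(\x_i)$ piece, the remainder is precisely the logarithm of $\sum_s (\prod_i c_s/c_0)(U/(q-1))^s/s!$. Using the proposition identifying $c_s(\x;q)/c_0(\x;q)$ with the Rogers--Szego function $R_s(1,x_1,x_2,\dots)$, this logarithm is, by construction, the generating series \eqref{A-defn} for the $\calA_s$.

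At this point, extracting the coefficient of $U^r/r!$ and pairing with $h_{\tilde\muhat}$ on the $\x$-side yields \eqref{gf-fmla}. The main obstacle I expect is the combinatorial matching of partitions and pairings: one must verify that (i) the substitution $e_s(u)\mapsto U^s/s!$ is compatible with reading off the dimension-$1$-per-leaf coefficient as the coefficient of $U^r/r!$; (ii) the partitions $\tilde\mu^i$ of $r$, rather than the original $\mu^i$ of $|\mu^i|$, are indeed the correct types on the $\x$-side because of the simultaneous encoding of the hub dimension and the leg dimensions; and (iii) the trivial factor $(q-1)\Log\prod_i c_0(\x_i)$ discarded above contributes nothing to $\langle\cdot,h_{\tilde\muhat}\rangle$ for the root $\v_\muhat$ we consider. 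Once these compatibilities are checked, the identity $A_{\Gamma,\v_\muhat}(q)=\langle\calA_r,h_{\tilde\muhat}\rangle$ follows.
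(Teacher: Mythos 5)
Your proposal is correct and reconstructs the paper's argument essentially verbatim: specialize the HLV series at $\y_i=(u_i,0,\dots)$, reduce modulo $I=\langle u_1^2,\dots,u_r^2\rangle$, replace $e_s(u)$ by $U^s/s!$ via the multinomial congruence, invoke the Rogers--Szeg\"o identity $c_s/c_0 = R_s(1,x_1,x_2,\dots)$ to recognize the definition of $\calA_s$, and finally pair with $h_{\tilde\muhat}$ using \cite[Proposition 1.3(i)]{hausel-letellier-villegas}. The three compatibility checks you flag at the end are exactly the points the paper relies on implicitly, and your reasoning for each is sound.
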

The right hand side of~\eqref{gf-fmla} gives the coefficient of
$m_{\tilde\muhat}$ when writing $\calA_r$ in terms of the monomial
symmetric functions. For example, for $k=1$ we obtain the following 

\bigskip
\begin{tabular}{CCL}
\calA_1&=& m_{1}\\
\calA_2 &=& m_{1^2}\\
\calA_3&=&(q+4)m_{1^3}+m_{12}\\
\calA_4&=&(q^3 + 6q^2 + 20q + 33)m_{1^4}+(q^2 + 5q + 11)m_{1^22}
+(q+4)m_{2^2}+m_{13}
\end{tabular}

\bigskip
\noindent
In particular we see the polynomial $q^2+5q+11$ corresponding to the
example discussed in~\S\ref{two-step}. The coefficient of $m_{1^4}$ on
the other hand corresponds to a dandelion quiver with four short legs
and a long leg with dimension vector $(3,2,1)$ along its vertices
corresponding to the full flag variety $\GL_4/B$. Here is the list of
permutations $w$ of block structure $(1,1,1,1)$ with connected
inversion graphs and their corresponding $R$-polynomials.

\bigskip
\begin{center}
\begin{tabular}{C|R}
w & R_w \\
\hline
  4321 &   q^3 + 3q^2 + 6q + 6\\
  4312 &   q^2 + 3q + 4 \\
  4231&   q^2 + 3q + 4 \\
  4213 &   q + 2 \\
  4132 &   q + 2 \\
  4123 &   1 \\
  3421 &   q^2 + 3q + 4 \\
  3412 &   q + 3 \\
  3241 &   q + 2 \\
  3142 &   1 \\
  2431 &   q + 2 \\
  2413 &   1 \\
 2341 &   1\\
\end{tabular}
\end{center}

\bigskip
We verify that indeed the sum of these polynomials is $ q^3 + 6q^2 +
20q + 33$.

\bibliographystyle{amsplain} \bibliography{supernova}
\end{document}